\newsavebox{\measure@tikzpicture}
  \def\tikz@width{#1}%
  \def\tikzscale{1}\begin{lrbox}{\measure@tikzpicture}%
  \edef\tikzscale{\pgfmathresult}%
\newcommand\avsuminner[2]{%
  {\sbox0{$\m@th#1\sum$}%
   \vphantom{\usebox0}%
   \ooalign{%
     \hidewidth
     \smash{\vrule height\dimexpr\ht0+1pt\relax depth\dimexpr\dp0+1pt\relax}%
     \hidewidth\cr
     $\m@th#1\sum$\cr
   }%
  }%
}
\renewcommand{\d}{\mathrm{d}}
\newcommand{\musxhu}[2][]{\todo[color=pink, #1]{Julian: #2}}
\newcommand{\pg}[1]{{\color{cyan} #1}}
\newcommand{\lx}[1]{{\color{darkgreen} #1}}
\title{A novel approach to hydrodynamics  for\\ long-range generalized exclusion}
\author[1]{Patrícia Gonçalves\orcidlink{0000-0002-8093-8810}}
\author[2]{Julian Kern\orcidlink{0000-0002-8231-0736}}
\author[3]{Lu Xu\orcidlink{0000-0001-9973-6090}}
\affil[1]{CAMGSD, Instituto Superior Técnico, Lisbon, Portugal}
\affil[2]{Weierstraß Institute for applied Analysis and Stochastics, Berlin, Germany}
\affil[3]{Gran Sasso Science Institute, L'Aquila, Italy}
\begin{document}

\maketitle

\begin{abstract}
We consider a class of generalized long-range exclusion processes evolving either on $\mathbb Z$ or on a finite lattice with an open boundary.
The jump rates are given in terms of a general kernel depending on both the departure and destination sites, and it is such that the particle displacement has an infinite expectation, but some tail bounds are satisfied.
We study the superballisitic scaling limit of the particle density and prove that its space-time evolution is concentrated on the set of weak solutions to a non-local transport equation.
Since the stationary states of the dynamics are unknown, we develop a new approach to such a limit relying only on the algebraic structure of the Markovian generator.
\end{abstract}

\section{Introduction}
\bigskip

The field of statistical mechanics is concerned with deriving macroscopic properties of relevant quantities of physical systems from their microscopic description. 
When microscopic models can be described by means of an interacting particle system, the first objective is to derive a law of large numbers, usually referred to as the \emph{hydrodynamic limit} of the system. 
By appropriately rescaling space and time, one hopes to recover a system of partial differential equations (PDEs) that describe the macroscopic space-time evolution of each of the conserved quantities of the system. 
In its simplest form, one may consider $N$ independent symmetric simple random walks on $\mathbb{Z}^d$ for which the usual law of large numbers yields that, on the parabolic scale $(x,t)\mapsto (x/N, t/N^2)$, the space-time evolution of the system's density is given by the heat equation in $\mathbb{R}^d$. 
A natural step, then, is to introduce interaction among particles via, for example,  an \emph{exclusion rule} dictating that jumps to already occupied sites are suppressed. 
In this paper, we will consider a relaxed variant of the exclusion rule that allows up to $\kappa\in\mathbb N$ particles per site, see also \cite{SS94,FGS22}.

The natural object to derive a hydrodynamic limit is the distribution of conserved quantities.
In the case of (generalized) exclusion processes, the only conserved quantity is the total number of particles.
In order to study its distribution, one usually considers the induced empirical distribution of particles.
The associated hydrodynamic limit, then, is a law of large numbers on the space of measures.
In order to obtain a PDE describing the macroscopic dynamics, one needs to ensure that the limiting measure has a density that follows some hydrodynamic equation.

In the case of long-range dynamics, there are multiple phase transitions governing the macroscopic behaviour.
In the setting of symmetric jump kernels, the behaviour of the system is diffusive when the variance of the jump kernel becomes finite, fractional otherwise, see e.g.~\cite{J08}.
For asymmetric jump kernels, the critical point is where the mean of the kernel becomes finite, in which case the system transitions from a fractional scale to a hyperbolic one, see e.g.~
\cite{SS18} in the totally asymmetric case.
An additional difficulty in the setting of long-range systems in the fractional regime is that the non-locality persists at the macroscopic level.
This leads to non-local (or: fractional) hydrodynamic equations for which the solution theory is less developed as in the classical cases.
In the asymmetric case, even the uniqueness of the natural solution concept remains an open problem.
This has the immediate drawback that the classical approach to convergence via tightness and characterization of the limit point is not sufficient to conclude.
Since we are treating more general cases, the same restriction also affects this work.

\subsection{Long-range stochastic dynamics}

Fix a maximal number $\kappa\in\mathbb N$ to be allowed per site.
Denoting the number of particles at a site $x$ and at time $t$ by $\eta_t(x)$, this means that $\eta_t(x)\in\{0,1,\cdots,\kappa\}$ for any $x$ and any $t$. 
To force the dynamics to obey the generalized exclusion rule, we choose the rate for a particle jumping from a site $x$ to a site $y$ to equal $\mathfrak{p}(x,y)\eta(x)(\kappa-\eta(y))$ for some jump kernel $\mathfrak{p}$, cf.~\Cref{ssec:ass_jump_kernel}. 
To fix ideas, one may assume at this moment the jump kernel to take the classical form $\mathfrak{p}(x,y)\sim \vert y-x\vert^{-1-\gamma}$ for some $\gamma \in (0,1)$.
\begin{figure}[htb!]
	\centering
	\begin{scaletikzpicturetowidth}{\textwidth}
	\begin{tikzpicture}[scale=\tikzscale]
\draw[latex-latex] (-7.1,0) -- (7.1,0) ; 
\foreach \x in  {-6,-5,-4,-3,-2,-1,0,1,2,3,4,5,6} 
	\draw[shift={(\x,0)},color=black] (0pt,3pt) -- (0pt,-3pt);
\foreach \x in {-6,-5,-4,-3,-2,-1,0,1,2,3,4,5,6} 
	\pgfmathtruncatemacro{\y}{\x+8}
	\draw[shift={(\x,0)},color=black] (0pt,0pt) -- (0pt,-3pt) node[below] {$\y$};

\node (left) at (-6.8,-12pt) {$\dots$};
\node (right) at (6.8,-12pt) {$\dots$};

\foreach \x in {-6,-4,1,2,5}
	\node[circle,shading=ball,minimum width=0.2cm, ball color=black] (\x) at (\x,0.2) {};
	
	\node (b) at (-3.9,0.3) {};
	\node (a) at (-2.1,0.2) {};
	\node (bb) at (2.1,0.3) {};
	\node (aa) at (4.9, 0.3) {};
	\node (bbb) at (0.9, 0.3) {};
	\node (aaa) at (0, 0.2) {};
	\node (bbbb) at (5.1, 0.3) {};
	\node (aaaa) at (5.9, 0.2) {};
	\node (c) at (-5.9, 0.3) {};
	\node (d) at (-1.1, 0.2) {};

    \draw[->] (b.north) to [out=60, in=120]  (a.north);
    \node at (-3,1.1) {$\mathfrak{p}(4,6)$};
    \draw[->, color=red] (bb.north) to [out=60, in=120]  (aa.north);
    \node[color=red] at (3.5,1.15) {$/$};
    \draw[->] (bbb.north) to [out=120, in=60]  (aaa.north);
    \node at (0.5,0.85) {$\mathfrak{p}(9,8)$};
    \draw[->] (bbbb.north) to [out=60, in=120]  (aaaa.north);
    \node at (5.5,0.9) {$\mathfrak{p}(13,14)$};
    \draw[->] (c.north) to [out=60, in=120]  (d.north);
    \node at (-3.5,2) {$\mathfrak{p}(2,7)$};
	\end{tikzpicture}
	\end{scaletikzpicturetowidth}
	\caption{Illustration of the bulk dynamics of the long-range exclusion process ($\kappa = 1$) on $\mathbb Z$.}\label{fig:bulk_dyn}
\end{figure}
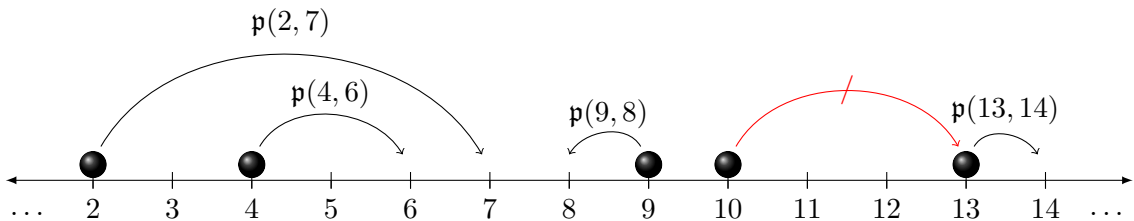

Additionally to the particle system on $\mathbb Z$, we also want to allow for dynamics on a bounded region in contact with reservoirs.
Following \cite{BGJ21,BGOj19,BCGS22}, we will consider the above dynamics on the \emph{bulk} $\{1,2,\dots, N-1\}$ and replace all other sites of $\mathbb Z$ by stochastic reservoirs, see also \cite{S21}.
These interact with the bulk through the same type of dynamics, the reservoirs acting as if they would contain some density of particles.
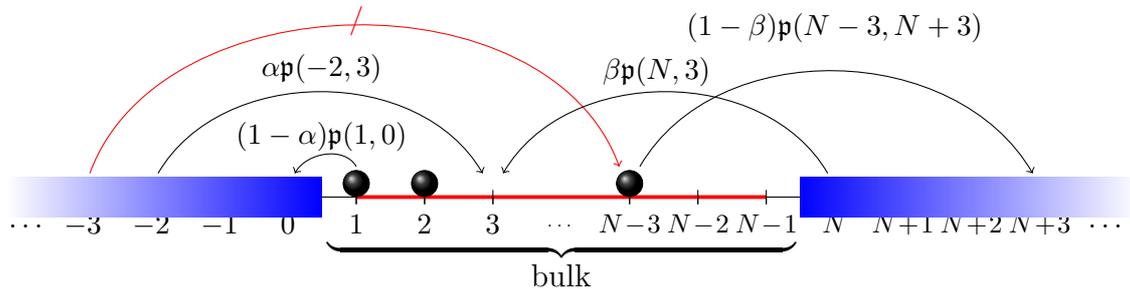
\begin{figure}[ht]
	\centering
	\begin{scaletikzpicturetowidth}{\textwidth}
	\begin{tikzpicture}[scale=\tikzscale]
\draw[latex-latex] (-4.1,0) -- (12.5,0) ; 
\draw[line width=0.5mm, color=red] (1,0) -- (7,0);
\foreach \x in  {-3,-2,-1,0,1,2,3,5,6} 
	\draw[shift={(\x,0)},color=black] (0pt,3pt) -- (0pt,-3pt);
\node (bulkmid) at (4, -12pt) {\tiny{$\dots$}};
\draw[shift={(7,0)},color=black] (0pt,3pt) -- (0pt,-3pt);
\draw[shift={(8,0)},color=black] (0pt,3pt) -- (0pt,-3pt);
\draw[shift={(5,0)},color=black] (0pt,0pt) -- (0pt,-3pt) node[below] {\small{$N\!-\!3$}};
\draw[shift={(6,0)},color=black] (0pt,0pt) -- (0pt,-3pt) node[below] {\small{$N\!-\!2$}};
\draw[shift={(7,0)},color=black] (0pt,0pt) -- (0pt,-3pt) node[below] {\small{$N\!-\!1$}};
\draw[shift={(8,0)},color=black] (0pt,0pt) -- (0pt,-3pt) node[below] {\small{$N$}};
\foreach \x in  {1,2,3} 
	\draw[shift={(8+\x,0)},color=black] (0pt,3pt) -- (0pt,-3pt);
\foreach \x in {-3,-2,-1,0,1,2,3} 
	\draw[shift={(\x,0)},color=black] (0pt,0pt) -- (0pt,-3pt) node[below] {$\x$};
\foreach \x in {1,2,3}
	\draw[shift={(\x+8,0)}, color=black] (0pt,0pt) -- (0pt,-3pt) node[below] {\small{$N\!+\!\x$}};

\node (left) at (-3.8,-12pt) {$\dots$};
\node (right) at (12,-12pt) {$\dots$};

\node (bulktext) at (4,-1) {$\underbrace{\qquad\qquad\qquad\qquad\qquad\qquad\qquad\qquad}_{\text{\large{bulk}}}$};

\foreach \x in {1,2,5}
	\node[circle,shading=ball,minimum width=0.2cm, ball color=black] (\x) at (\x,0.2) {};
	
	\node (b) at (1,0.3) {};
	\node (bb) at (2,0.3) {};
	\node (bbb) at (5,0.3) {};
	\node (nb) at (3, 0.2) {};
	\node (nbb) at (6,0.2) {};
	\node (nbbb) at (7,0.2) {};
	
	\node (l) at (-2.9, 0.2) {};
	\node (ll) at (-1.9, 0.2) {};
	\node (lll) at (-0.9, 0.2) {};
	\node (llll) at (0.1, 0.2) {};
	
	\node (r) at (7.9, 0.2) {};
	\node (rr) at (8.9,0.2) {};
	\node (rrr) at (9.9, 0.2) {};
	\node (rrrr) at (10.9, 0.2) {};

    \draw[->] (ll.north) to [out=60, in=120]  (nb.north west);
    \node at (0.5,1.9) {$\alpha\mathfrak{p}(-2, 3)$};
    \draw[->, color=red] (l.north) to [out=70, in=110] (bbb.north west);
    \node[color=red] at (1,2.6) {$/$};
	\draw[->] (b.north) to [out=120, in=60]  (llll.north);
	\node at (0.5, 0.9) {$(1-\alpha)\mathfrak{p}(1,0)$};
	
	\draw[->] (r.north) to [out=120, in=60] (nb.north east);
	\node at (5.4,1.85) {$\beta\mathfrak{p}(N, 3)$};
	
	\draw[->] (bbb.north east) to [out=60, in=120] (rrrr.north);
	\node at (8,2.5) {$(1-\beta)\mathfrak{p}(N-3, N+3)$};
	
	\shade[right color=blue, left color=white, opacity=0.4]
      (-4.1, -0.3) rectangle (0.5,0.3);
	\shade[left color=blue, right color=white, opacity=0.4]
      (7.5,-0.3) rectangle (12.5,0.3);
	\end{tikzpicture}
	\end{scaletikzpicturetowidth}
	\caption{Illustration of the reservoir dynamics for the long-range exclusion process ($\kappa = 1$) with density $\alpha\in [0,1]$ to the left and $\beta\in [0,1]$ to the right.}\label{fig:res_dyn}
\end{figure}

For dynamics with only finite-range jumps, the seminal paper \cite{R91} investigated the case of translation-invariant jump distribution $\mathfrak p$ with drift on $\mathbb Z^d$, and showed the convergence to Burgers' equation under the hyperbolic scale $(x,t)\mapsto (x/N, t/N)$.
The result has been extended by \cite{B12} to systems evolving on a subset of $\mathbb{Z}^d$ and in contact with reservoirs at the boundary, see also \cite{X24,X22}.
For dynamics with long-range jumps,
\cite{J08} first studied the symmetric exclusion processes (when $\kappa=1$) on $\mathbb{Z}^d$ driven by a translation-invariant jump kernel of the form \lx{$\mathfrak{p}(x,y) = \mathfrak p(|y-x|) \sim \vert y-x\vert^{-d-\gamma}$}, $\gamma\in (0,2)$, and showed the convergence to a fractional heat equation under a superdiffusive scale.
The corresponding boundary-driven dynamics have been treated in a series of papers \cite{BGJ21,BGOj19,BCGS22} in one dimension.
In the asymmetric case, the situation is analogous in the sense that one obtains for $\gamma\in(0,1)$ a fractional Burgers' equation in the limit under the superballistic 
scale $(x,t)\mapsto(Nx,N^\gamma t)$.
As soon as $\gamma \geq 1$, the mean jump size becomes finite and the system behaves as the entropy solution to the classical Burgers' equation under the hyperbolic scale.
These results are proved in \cite{SS18} for translation-invariant $\mathfrak p(x,y)=(y-x)^{-d-\gamma}\mathds1_{x<y}$.
As mentioned before, the uniqueness of the notion of a weak solution to the fractional Burgers' equation proposed therein is still an open problem.

\subsection{Our contributions}

The analysis of exclusion-type processes usually relies on the entropy method pioneered by \cite{GPV} to prove the replacement lemmas necessary to pass from the microscopic model to the macroscopic limit.
For this approach to work, one has to have a good control over so-called reference measures that are usually stationary measures or inspired by them.
It has been shown to work for the symmetric long-range case \cite{J08} as well as in the more general setting in \cite{SS18} in the case of totally asymmetric jump kernels.
In both cases, the jump kernel was taken to be of the special form $\vert y-x\vert^{-1-\gamma}$ in the one-dimensional model.
Since the computation for the stationary measures usually require translation-invariant jump kernels, it is unclear whether this approach can be generalized to other kernels.

In this work, we propose a new approach that circumvents all these difficulties.
Instead of relying on the entropy method, we are able to prove the necessary replacement lemmas directly by using the algebraic structure of the generator for the generalized long-range exclusion processes to obtain the hydrodynamic behaviour for a wide range of transition kernels.
The crucial part of the proof is the replacement of microscopic expressions of the form
\[
\dfrac{1}{N^2}\sum_{x\neq y} H\left(\dfrac{x}{N},\dfrac{y}{N}\right)\eta(x)\big(\kappa - \eta(y)\big)\qquad\text{ by }\qquad \dfrac{1}{N^2}\sum_{x\neq y} H\left(\dfrac{x}{N},\dfrac{y}{N}\right)\eta(x)^{\epsilon N}\left(\kappa - \eta(y)^{\epsilon N}\right),
\]
where $\eta(x)^{\epsilon N}$ denotes the average occupation number over a box of size $2\epsilon N + 1$.
Using the fact that the map $(\mathsf{s},\mathsf{s}')\mapsto \mathsf{s}(\kappa - \mathsf{s}')$ is affine in both $\mathsf{s}$ and $\mathsf{s}'$, the necessary replacement lemma can be translated into a statement on the analytic properties of $H$.

In practice, $H$ is a combination of the jump kernel $\mathfrak{p}$ and some expression involving smooth test functions.
That allows us to formulate rather weak assumptions on the jump kernel in order for the statement to hold.
More precisely, one needs the jump kernel to be locally Lipschitz and ``spread out enough", the latter being an assumption on the process to be long-range in the sense that the above expression does not concentrate close to the diagonal $\{x = y\}$.
A final assumption on the jump kernel is to be homogeneous, so that it behaves well w.r.t.~the time scaling.
If one is willing to allow for microscopic generators that depend on the scaling parameter, the class of admissible jump kernels can be extended to a large class of locally Lipschitz continuous functions.

Compared to the relative entropy method used to prove the hydrodynamic limit\footnote{The proof in \cite{SS18} has some restriction on the initial condition. In the setting of this work, corresponding to the case $\gamma \in (0,1)$, however, one observes that their proof goes through without these restrictions.} for the classical jump kernel $\vert y-x\vert^{-1-\gamma}$, $\gamma\in (0,1)$ in \cite{SS18},
our approach allows to treat much more general jump kernels that are not necessarily translation-invariant, with a much simpler proof.\\

Additionally to the generalized exclusion process on $\mathbb Z$, we also consider the associated boundary-driven dynamics.
In the case $\kappa = 1$ with a symmetric long-range jump kernel of the form $\vert y-x\vert^{-1-\gamma}$, the hydrodynamic limits have been investigated in \cite{BGJ21,BGOj19,BCGS22}.
In the asymmetric case, only the totally asymmetric exclusion process has been treated, see \cite{K24}.
Although only the classical jump kernel $\vert y-x\vert^{-1-\gamma}$ has been treated explicitly, the method therein can be applied to other totally asymmetric systems.

In this paper, we obtain a hydrodynamic equation under similar assumption on the jump kernel as in the case without boundaries.
In order to obtain useful information on the boundary behaviour, however, we have to restrict ourselves to a smaller class of jump kernels $\mathfrak{p}$: 
when the symmetric part $\mathfrak s(x,y)$ and the anti-symmetric part $\mathfrak a(x,y)$ of the jump kernel are bounded by $c|x-y|^{-1-\gamma}$ with some positive constant $c$ from both above and below, we are able to obtain Dirichlet-type boundary conditions.

\subsection{Possible extensions}

\paragraph{Other geometries.}
In this work, we restrict ourselves to processes on $\mathbb Z$ for readability, but the same strategy can also be used to treat generalized exclusions in higher dimensions and also on different discretizations, e.g.~the triangular or hexagonal lattices.
In the case of boundary-driven systems, additional care has to be taken in the approximation of the considered bounded domain, see e.g.~\cite{B12,SPS24}.

\paragraph{Multi-component systems.}

Another possible generalization is towards multi-type models.
Instead of considering one type of particles, one considers two types of particles, e.g.~positive and negative particles.
They are then couples in the very natural way of imposing the generaliezed inclusion rule onto the total number of particles at a given site.
Then, the above jump rate becomes $\eta^\pm(x)\big(\kappa - \eta^+(y) - \eta^-(y)\big)$ for positive and negative particles respectively.
Since this expressions remains affine in the occupation variables, it is to be expected that our approach can be adapted to this context.

\paragraph{Inclusion processes.}
The main novelty of our approach is that we make use of the bilinearity of the generator expressions.
In the one-dimensional case, this restricts its application, up to a time rescaling, to three types of processes: independent random walks, generalized exclusion processes and inclusion processes.
The latter replace the expression $\eta(x)\big(\kappa - \eta(y)\big)$ by $\eta(x)\big(\alpha + \eta(y)\big)$, encouraging the particles to jump onto crowded sites.

Unfortunately, inclusion processes are much harder to handle.
Since particles jump preferentially onto crowded sites, the occupation numbers are unbounded \emph{a priori}.
This leads already to difficulties in rigorously defining the process.
In the symmetric case, duality arguments can be employed to define the process on $\mathbb Z$ for a class of initial conditions for which all mixed moments of occupation numbers are uniformly bounded, see \cite{GRV10,KR16} for details.
In the asymmetric case, it is unclear if and under which conditions existence can be ensured.
For the boundary-driven dynamics, the problem is more amenable as the bulk dynamics conserve the total number of particles, so that the total number of particles remains finite.
If one allows for boundary interactions, however, we are not aware of an argument that ensures existence in the asymmetric case; in the symmetric case, see \cite{FGS22}.

In the proof of the main replacement lemma, we have taken care to weaken the assumption on the particle configuration.
Instead of assuming the occupation number to be uniformly bounded, we only assume a bound that is uniform in space and $L^2$ in time.
In the symmetric case, this bound is satisfied by the inclusion process whenever started from one of the initial conditions described above.
It is our hope that a similar estimate can be shown to hold in some asymmetric regimes, which then would imply the hydrodynamic limit through the arguments exposed in this paper.

\paragraph{Fast boundary dynamics.}
In this work, we do not consider fast boundary dynamics, i.e.~boundary dynamics that act on a shorter time scale than the bulk dynamics.
The corresponding hydrodynamic limit does not really depend on the bulk dynamics and does not require the same replacement lemmas as the boundary term is linear in the occupation variable, see \cite{BGJ21} for details.

\subsection{Remarks and open questions}
Below we describe some problems related to the results of this article which are left for future work.
\paragraph{Uniqueness and regularity of the hydrodynamic equation.}
As in the totally asymmetric setting \cite{SS18}, no appropriate uniqueness result is available for the hydrodynamic equation we derive.
In the boundary-driven case, we derive Dirichlet-type boundary conditions for a range of singular jump kernels, but it remains unclear whether these are enough to characterize the solution uniquely.
Investigating the uniqueness of the proposed weak solution concept, as well as identifying additional properties sufficient for uniqueness to hold, remains an open problem.

Simulations seem to indicate that the solution, even in the asymmetric case, is continuous or more regular.
Unfortunately, the arguments used in \cite{BGJ21,BCGS22} to show an appropriate energy estimate and, with it, Sobolev regularity, cannot be generalized to asymmetric jump kernels. 
It therefore remains an open problem to show regularity from the microscopic level or to provide a new argument for some energy estimate to hold.

\paragraph{Fluctuations.}
Once the hydrodynamic limit is derived, it is natural to investigate the fluctuations of the microscopic system around the typical profile. To that end, two settings are considered: the simpler approach is to start the system from its stationary measure if that measure is known explicitly; the more challenging approach is to start the system from a general measure, typically satisfying the same conditions as those required for hydrodynamic limits. In this case, the limit density fluctuation field is expected to be a solution to a stochastic PDE that includes an additional noise term compared to the hydrodynamic equation, usually in the form of a space-time white noise. 

There are many results in the literature about equilibrium fluctuations but in the setting of long-range dynamics, few results are known. The non-equilibrium scenario is even much less understood.
In the case of the translation invariant jump kernel $\mathfrak{p}(x,y) = c^\pm \vert y -x\vert^{-1-\gamma}$ with $c^\pm$ depending on the sign of $y-x$, the equilibrium fluctuations have been derived in \cite{GM17}, see also \cite{S21}.
But even in the symmetric setting, out-of-equilibrium fluctuations remain an open question for long-range systems, with the primary difficulty being the control of correlation estimates among occupation variables.

\paragraph{Tagged particles.}

Another question of interest relates to the Law of Large Numbers or Central Limit Theorem results for observables of the model other than the particle density. 
An intriguing question is to derive these results for the integrated current of particles in the system and a tagged particle. 
In the nearest-neighbour case, two settings are distinguished: first, for certain symmetric rates, the limits are fractional Brownian motions; however, in the case of certain asymmetric rates, the limits become Brownian motions; see e.g.~\cite{GM08, SV23} and references therein.

In the setting of long-range interactions, much less is known.
To our knowledge, no result has been obtained in the symmetric setting.
The recent work \cite{Z24} obtains a Lévy-description of the tagged particle for the totally asymmetric kernel, corresponding in one dimension to $(y-x)^{-1-\gamma}\mathds{1}_{y> x}$, with $\gamma \in (0, 3/2)$.
The method therein relies crucially on the translation-invariance of the jump kernel to obtain Markovian properties for the induced tagged particle.
It would be interesting to obtain analogous results in the context of general jump kernels that should lead to Lévy-type processes for the dynamics of a tagged particle.

\subsection{Outline}

The article is structured in three additional sections. 
In \Cref{sec:notation}, we introduce important notation that we will use throughout. 
\Cref{sec:gen_exclusion} is dedicated to the generalized exclusion evolving on $\mathbb Z$. 
Therein, we introduce the model and its main properties, we state the hydrodynamic limit and we present its proof. 
In \Cref{sec:gen_exclusion_boundary}, we present the necessary modifications to treat the generalized exclusion in contact with (slow) reservoirs.

\vspace{1cm}

\paragraph{Acknowledgments.}

P. Gonçalves thanks  FCT/Portugal for financial support through the projects UIDB/04459/2020, UIDP/04459/2020 and the project SAUL funded by FCT-ERC.
J. Kern has been funded by the Deutsche Forschungsgemeinschaft (DFG, German Research Foundation) under Germany's Excellence Strategy – The Berlin Mathematics Research Center MATH+ (EXC-2046/1, project ID: 390685689) and through grant CRC 1114 ``Scaling Cascades in Complex Systems", Project Number 235221301, Project C02 ``Interface dynamics: Bridging stochastic and hydrodynamic descriptions".
L. Xu thanks the financial support from the project Dipartimento di Eccellenza of MUR, Italy.

\section{Notation}\label{sec:notation}

Throughout this paper, let $\mathbb N=\{1,2,\ldots\}$ be the set of positive natural numbers, and denote $\mathbb N_0:=\mathbb N \cup \{0\}$.
We write $\mathbb R_+:=[0,+\infty)$ for the time line and $I=\mathbb R$ or $[0,1]$ for the spatial domain.
Let $C_c(I)$ be the set of compactly supported continuous functions $g:I\to\mathbb R$, and $C_c(\mathbb R_+ \times I)$ be the set of continuous functions $G:\mathbb R_+ \times I \to \mathbb R$ that are compactly supported in both space and time.
Denote by $C_c^\infty(I)$ the set of all smooth functions in $C_c(I)$, endowed with the topology induced by uniform convergence of all derivatives. 
Denote by $C_c^{1,\infty}(\mathbb R_+ \times I)$ the set of all functions in $C_c(\mathbb R \times I)$ that are continuously differentiable in time and smooth in space, endowed with the similar topology.
Given a function $G$ on $\mathbb R_+\times I$, we write $G_t = G(t,\cdot)$ for each $t\in\mathbb R_+$.
For a Lebesgue-measurable set $A \subseteq \mathbb R^d$, 
let $L^\infty(A)$ be the set of essentially bounded functions on $A$.
Also denote by $L^1(A)$ (resp.~$L_\mathrm{loc}^1(A)$) the set of integrable (resp. locally integrable) functions on $A$ w.r.t.~the Lebesgue measure.

Let $\mathcal{M}(I)$ be the set of positive Radon measures on $I$.
The integral of $g\in C_c(I)$ with respect to $\pi\in\mathcal{M}(I)$ is denoted by $\langle \pi,g \rangle$.
$\mathcal M(I)$ is a Polish space when endowed with the vague topology, i.e.~$\pi_n$ converges to $\pi$ if and only if $\lim_{n\to\infty} \langle \pi_n,g \rangle=\langle \pi,g \rangle$ for all $g \in C_c(I)$.
Furthermore, fix some $\kappa>0$ and let $\mathcal M_\kappa(I)$ be the set of all measures $\pi\in\mathcal M(I)$ that are absolutely continuous w.r.t.~the Lebesgue measure, and the Radon-Nikodym density is below $\kappa$ almost everywhere:
\begin{equation}\label{eq:meas-space}
\mathcal{M}_\kappa(I) := \{\pi\in\mathcal{M}\;:\; \pi(\mathrm{d}u) = \rho(u)\mathrm{d}u \text{ and } 0\leq \rho\leq \kappa\text{ a.e.}\}.
\end{equation}
We drop the reference to $I$ in $\mathcal M(I)$, $\mathcal M_\kappa(I)$ if it is clear from the context.

%
For a Polish space $E$, we denote the space of càdlàg paths $\mathbb R_+\to E$ endowed with the usual Skorokhod topology by $\mathbb D(\mathbb{R}_+;E)$.
We write $C(\mathbb{R}_+; E)\subseteq \mathbb D(\mathbb{R}_+;E)$ for the subspace of continuous functions $\mathbb R_+\to E$, endowed with the topology induced by local uniform convergence.

Given two functions $f$ and $f'$ on $\mathbb R^d$, the notion $f \lesssim f'$ means that $f \le Cf'$ with some universal constant $C$.
If the constant $C$ depends on some parameter $\alpha$, we write $f \lesssim_\alpha f'$ to emphasise the dependence.
These notions extend to the case of two sequences $\{a_n\}$ and $\{a'_n\}$ in a natural way.

\paragraph{Particle configurations.}
The microscopic dynamics considered in this paper are defined either on the one-dimensional inifinite lattice $\mathbb Z$, or on the finite one $\Lambda_N:=\{1,\ldots,N-1\}$ for an integer $N\ge2$.
Fix $\kappa \in \mathbb N$ as the maximal occupation number. 
Then, the configuration space is $\Omega:=\{0,1,\ldots,\kappa\}^\mathbb Z$ or $\Omega_N:=\{0,1,\ldots,\kappa\}^{\Lambda_N}$.
For a configuration $\eta\in\Omega$ (resp.~$\eta\in\Omega_N$) and a site $x\in\mathbb Z$ (resp.~$x\in\Lambda_N$), we call $\eta(x)$ the \emph{occupation variable} at $x$.

For $x$, $y\in\mathbb Z$ (resp.~$x$, $y\in\Lambda_N$), we define the map $\eta \mapsto \eta^{x,y}$ for $\eta\in\Omega$ (resp.~$\eta\in\Omega_N$) as follows: if $\eta(x) = 0$ or $\eta(y) = \kappa$, set $\eta^{x,y} := \eta$; otherwise,
\[
\eta^{x,y}(z) := \begin{cases}
    \eta(z) &\text{ if } z\not\in\{x,y\}\\
    \eta(x) - 1 & \text{ if } z= x\\
    \eta(y) + 1 & \text{ if } z= y
\end{cases}.
\]
The configuration $\eta^{x,y}$ corresponds to the action that moves one particle from $x$ to $y$.
Furthermore, define the actions $\eta\mapsto \eta^{x,+}$ and $\eta^{x,-}$, corresponding to adding or removing a particle: $\eta^{x,+} := \eta$ if $\eta(x) = \kappa$, $\eta^{x,-} := \eta$ if $\eta(x) = 0$, and otherwise
\[
\eta^{x,\pm}(z) := \begin{cases}
    \eta(z) & \text{ if }z\neq x\\
    \eta(x) \pm 1 & \text{ if } z = x
\end{cases}.
\]

For $\ell\in\mathbb N$, let $\Gamma_\ell := \{z \in\mathbb Z\;:\; \vert z\vert\leq \ell\}$ denote the box around the origin of the size $\vert \Gamma_\ell\vert=2\ell + 1$.
Define the block average in $x+\Gamma_\ell$ through
\[
\eta(x)^\ell := \dfrac{1}{\vert \Gamma_\ell\vert}\sum_{z\in\Gamma_\ell} \eta(x+z).
\]

\paragraph{Naming conventions.}
We denote the time variables by $T,t,r,s\in\mathbb R_+$, the microscopic space variables by $x,y,z\in\mathbb Z$ or $\Lambda_N$, the macroscopic space variables by $u,v, w \in I$, the test functions depending only on the space variable by $g$, the test functions depending both on the time and space variables by $G$, the configurations of the state-space by $\eta$, the measures in $\mathcal{M}$ by $\pi$, and the macroscopic profile by $\rho$.

\paragraph{Abuse of notation.}
Throughout this paper, the lower (resp.~upper) summation limits are meant to be the largest integer below (resp.~smallest integer above) whenever we use a real number.
The first convention is also assumed in the context of block averages.
Moreover, we adopt the following type of contraction to improve readability:
\[
\limsup_{\epsilon\downarrow 0, N\uparrow \infty} := \limsup_{\epsilon\downarrow 0} \limsup_{N\uparrow\infty}.
\]
Namely, the subscripts in the limit shall be read from right to left.

\section{Generalized exclusion process}\label{sec:gen_exclusion}
\label{sec:gen-ex}

For the entirety of this section, we consider the spatial domain $I = \mathbb R$. 
The corresponding microscopic lattice is $\mathbb Z$ and the configuration space is $\Omega$.

{Let $\mathfrak p=\mathfrak{p}(x,y)$ be a \emph{nice} non-negative function defined on $\{(x,y)\in\mathbb Z^2;x\not=y\}$ which will be specified later.}
The generator of a \emph{$\kappa$-generalized exclusion} associated with the jump kernel $\mathfrak{p}$ is defined as
\begin{equation}
\mathcal Lf(\eta) := \sum_{(x,y)\in\mathbb Z^2} \mathfrak{p}(x,y) \eta(x)\big(\kappa - \eta(y)\big) \Big( f(\eta^{x,y}) - f(\eta) \Big),
\end{equation}
where $f:\Omega\to\mathbb R$ is any local\footnote{A function $f$ is said to be \textit{local} if $f$ depends on $\{\eta(x);x \in A\}$ for a finite set $A\subseteq\mathbb Z$.} function.
{In the definition above, we adopt the convention that $\mathfrak{p}(x,x)\equiv0$ for $x\in\mathbb Z$.}

\subsection{Assumptions on the jump kernel}\label{ssec:ass_jump_kernel}

Assume that the jump kernel can be extended to a function on
\[
\mathbb R_0^2:=\{(u,v)\in\mathbb R^2\;:\; u\not=v\},
\]
which is still denoted by $\mathfrak{p}=\mathfrak{p}(u,v)$, such that
\begin{itemize}
\item[(A1)] $\mathfrak{p}$ is Lipschitz continuous on any compact subset of $\mathbb R_0^2$;
\item[(A2)] $\mathfrak{p}$ is $-(1+\gamma)$-homogeneous with some constant $\gamma>0$:
\begin{equation}\label{eq:gamma}
    \mathfrak{p}(\lambda u, \lambda v) = \lambda^{-(1+\gamma)}\mathfrak{p}(u,v),
\end{equation}
for all $(u,v)\in\mathbb R_0^2$ and $\lambda>0$;
\item[(A3)] let $\mathfrak{s}=\mathfrak{s}(u,v)$ be the symmetric part of $\mathfrak{p}$ defined by
\begin{equation}\label{eq:sym_p}
\mathfrak{s}(u,v) = \mathfrak{s}(v,u) := \frac{\mathfrak{p}(u,v)+\mathfrak{p}(v,u)}2, \qquad (u,v)\in\mathbb R_0^2,
\end{equation}
then for any compact set $K\subset\mathbb R$,
\begin{equation}\label{eq:integrability}
\int_K \int_\mathbb R \mathfrak{s}(u,v) \min\big\{|u-v|,1\big\}\,\d v\,\d u < +\infty.
\end{equation}
\end{itemize}
Notice that a continuous extension that satisfies \eqref{eq:gamma} is unique if it exists, since the value of $\mathfrak{p}(q,q')$ is fixed for any rational numbers $q\not=q'$.
We also assume that
\begin{equation}\label{eq:p-sum}
\sup_{x\in\mathbb Z} \left\{ \sum_{y\not=x} \mathfrak{s}(x,y) \right\} < +\infty,
\end{equation}
and for any $R>0$ one has the tail estimates
\begin{align}
\label{eq:tail-1}
&\limsup_{\epsilon\downarrow 0, N\uparrow\infty} \frac{N^\gamma}{N^2}\sum_{|x|<RN} \sum_{|y-x| \le \epsilon N} \mathfrak{s}(x,y) |x-y| =0,\\
\label{eq:tail-2}
&\limsup_{m\uparrow\infty, N\uparrow\infty} \frac{N^\gamma}N \sum_{|x|<RN} \sum_{|y| \ge mN} \mathfrak{s}(x,y)= 0.
\end{align}

\begin{example}[Translation-invariant case]
\label{exa:translation-invariant}
Suppose that $\mathfrak{p}(x,y)$ satisfies the conditions above and is furthermore translation-invariant: $\mathfrak{p}(\cdot,\cdot)=\mathfrak{p}(\cdot+z,\cdot+z)$ for all $z\in\mathbb Z$.
Then, $\mathfrak{p}(x,y)=\mathfrak{p}_*(y-x)$ with\
\[
\mathfrak{p}_*(z) := \big(c\mathds1_{z<0}+c'\mathds1_{z>0}\big)|z|^{-1-\gamma}, \qquad \gamma\in(0,1),
\]
for constants $c$, $c'\ge0$.
In particular, $\mathfrak{p}_\mathrm{sy}(x,y):=\mathds1_{x\not=y}|x-y|^{-1-\gamma}$ generates the symmetric $\kappa$-exclusion (cf. \cite{J08}), while $\mathfrak{p}_\mathrm{asy}(x,y):=\mathds1_{x<y}|x-y|^{-1-\gamma}$ corresponds to the totally asymmetric $\kappa$-exclusion (cf. \cite{SS18}).
\end{example}

\begin{example}
More general examples of $p$ are given by
\[
\mathfrak{p}(x,y)=g(\theta)|x-y|^{-1-\gamma}, \qquad \gamma\in(0,1),
\]
where $\theta \in \mathbb T_{2\pi} \simeq [0,2\pi)$ is the angle of $(x,y)\in\mathbb R^2_0$ in the polar coordinates and $g$ is a bounded function that is $C^1$ on both $(-\tfrac\pi4,\tfrac\pi4)$ and $(\tfrac\pi4,\tfrac{5\pi}4)$.
We shall underline that $\gamma$ is not necessarily below $1$, for instance
\[
\mathfrak{p}(x,y)=\frac{\mathds1_{x<y}}{(|x|+|y|)^{\gamma/2}|x-y|^{1+\gamma/2}}, \qquad 
\gamma\in(0,2),
\]
and the corresponding symmetric version also satisfies our conditions.
\end{example}

\begin{remark}[A sufficient condition]
\label{rem:suff-cond}
Under (A2), a sufficient condition for \eqref{eq:integrability}--\eqref{eq:tail-2} that can be easily verified is $\gamma\in(0,1)$ and
\[
M := \sup \big\{\mathfrak{p}(u,v)\;:\;|u-v|=1\big\} < +\infty.
\]
Indeed, for any $(u,v)\in\mathbb R_0^2$, from \eqref{eq:gamma} we have
\[
\big|\mathfrak{p}(u,v)\big| = \left| \mathfrak p \left( \frac u{|u-v|},\frac v{|u-v|} \right) \right| |u-v|^{-1-\gamma} \le M|u-v|^{-1-\gamma}.
\]
Then all the conditions are satisfied since $\gamma\in(0,1)$.
\end{remark}
%
%

\subsection{Hydrodynamic limit}

From \eqref{eq:p-sum}, the closure of $\mathcal L$ generates a Feller semigroup on the space of bounded functions on $\Omega$, see \cite[pp. 27, Theorem I.3.9]{Liggett}.
Let $\mu_N$ be some probability measure on $\Omega$, and ${\eta^N=\{\eta_t^N\}_{t\ge0}}$ be the accelerated Markov process generated by $N^\gamma\mathcal L_N$ with initial distribution $\mu_N$.
Denote by $\mathbb{P}_{\mu_N}$ its law on $\mathbb D(\mathbb R_+;\Omega)$.
In the following contents, we will often omit the superscript $N$ in $\eta_t^N$ when there is no confusion.

Given $N\in\mathbb N$, define the empirical distribution of a configuration $\eta\in\Omega$ by
\begin{equation}\label{em_meas}
\pi^N(\eta) := \dfrac{1}{N}\sum_{x\in\mathbb Z} \eta(x)\delta_{x/N},
\end{equation}
where $\delta_u$ denotes the Dirac measure at $u\in\mathbb R$.
Recall that $\mathcal M=\mathcal M(\mathbb R)$ is the space of positive Radon measures on $\mathbb R$ and note that $\pi^N(\eta)\in\mathcal M$.

Assume that the sequence of initial distributions $\mu_N$ is associated to some measurable profile $\rho_\circ:\mathbb R\to[0,\kappa]$, in the sense that a law of large numbers for the empirical measure holds in probability at $t=0$: for all $g \in C_c(\mathbb R)$ and $\delta>0$,
\begin{equation}\label{eq:initial}
\lim_{N\uparrow\infty} \mu_N\Big( \left\{ \eta\in\Omega\;:\; \left\vert \big\langle \pi^N(\eta),g \big\rangle - \langle \rho_\circ, g \rangle \right\vert > \delta \right\} \Big) = 0.
\end{equation}

We hope to prove that the above convergence holds at any macroscopic time $t\in\mathbb R_+$, with the empirical profile evolving w.r.t.~the following \emph{hydrodynamic equation}:
\begin{equation}\label{eq:hydrodynamics}
\partial_t\rho_t(u)=\mathbb L\rho_t(u), \qquad \rho_0(u)=\rho_\circ(u), \qquad (t,u)\in\mathbb R_+\times\mathbb R,
\end{equation}
where $\rho_t:=\rho(t,\cdot)$ and $\mathbb L$ is a nonlocal operator given, heuristically, by
\begin{equation}
\mathbb L\rho(u) = \int_\mathbb R \Big[\mathfrak{p}(v,u)\rho(v)\big(\kappa-\rho(u)\big) - \mathfrak{p}(u,v)\rho(u)\big(\kappa-\rho(v)\big)\Big] \,\d v.
\end{equation}
Rigorously speaking, 
for each $\rho \in L^\infty(\mathbb R)$, $\mathbb L\rho$ is a bounded linear functional on $C_c^\infty(\mathbb R)$, such that for all $g\in C_c^\infty(\mathbb R)$, $(\mathbb L\rho)(g)=\langle \mathbb L\rho, g \rangle$ is given by
\begin{equation}\label{eq:operator_L}
\langle \mathbb L\rho, g \rangle := \iint_{\mathbb R^2} \mathfrak{p}(u,v)\big[g(v)-g(u)\big]\rho(u)\big(\kappa-\rho(v)\big) \,\d v\,\d u.
\end{equation}

\begin{definition}\label{def:weak-sol}
A \emph{weak solution} to \eqref{eq:hydrodynamics} is a measurable function $\rho:\mathbb R_+\times\mathbb R\to[0,\kappa]$, such that for all test functions $G \in C_c^{1,\infty}(\mathbb R_+\times \mathbb R)$,
\begin{equation}\label{eq:weak-sol}
\langle \rho_\circ,G_0 \rangle + \int_0^\infty \langle \rho_t, \partial_tG_t \rangle \,\d t + \int_0^\infty \langle \mathbb L\rho_t,G_t \rangle\,\d t = 0.
\end{equation}
\end{definition}
\begin{remark}
If $\mathfrak{p}(x,y)=\mathfrak p_*(|x-y|)$ is translation-invariant as well as symmetric, then $\mathbb L$ becomes the pseudo-Laplacian:
\begin{align*}
\mathbb L\rho(u) &= \kappa\int_\mathbb R \mathfrak{p}(u,v)\big[\rho(v) - \rho(u)\big] \,\d v\\
&= \kappa\int_0^\infty \mathfrak p_*(w)\big[\rho(u+w) + \rho(u-w) - 2\rho(u)\big] \,\d w.
\end{align*}
In particular, for $\mathfrak {p}_\mathrm{sy}$ given in Example \ref{exa:translation-invariant}, $\mathbb L=-(-\Delta)^{\frac\gamma2}$ becomes the usual fractional Laplacian operator, and \eqref{eq:hydrodynamics} reads
\[
\partial_t\rho_t(u)=-(-\Delta)^{\frac\gamma2}\rho_t(u), \qquad \rho_0(u)=\rho_\circ(u), \qquad (t,x)\in\mathbb R_+\times\mathbb R,
\]
In this special case, the hydrodynamic limit was proved in \cite[Theorem 2.3]{J08}.
\end{remark}

Let $\pi_t^N=\pi^N(\eta_{t})$ be the empirical distribution of $\eta_t$ and denote the corresponding push-forward measure of $\mathbb P_{\mu_N}$ on $\mathbb D(\mathbb R_+;\mathcal M)$ by $\mathcal{Q}^N$.
Also recall the continuous subspace $C(\mathbb R_+;\mathcal M_\kappa)$, where $\mathcal M_\kappa=\mathcal M_\kappa(\mathbb R)$ is defined in \eqref{eq:meas-space}.
For any path $\{\pi_t\}_{t\ge0}$ in $C(\mathbb R_+;\mathcal M_\kappa)$, there is a density $\rho=\rho(t,u)\in[0,\kappa]$ such that $\pi_t=\rho(t,u)\d u$ for all $t\ge0$.
From the continuity of the path,
\begin{equation}\label{eq:sol-space}
\rho \in L^\infty(\mathbb R_+\times\mathbb R) \cap C\big(\mathbb R_+;L_\mathrm{loc}^1(\mathbb R)\big),
\end{equation}
where $L_\mathrm{loc}^1(\mathbb R)$ is endowed with the weak topology: a sequence $\rho^n$ converges to $\rho$ if and only if $\langle \rho^n,g \rangle \to \langle \rho,g \rangle$ for any $g \in C_c(\mathbb R)$.

\begin{theorem}\label{theo:hydrodynamics}
Assume conditions (A1)--(A3), \eqref{eq:p-sum}--\eqref{eq:tail-2} and \eqref{eq:initial}.
The sequence $\{\mathcal Q^N\}_{N\ge1}$ is tight in $\mathbb D(\mathbb R_+;\mathcal M)$ and any weak limit point $\mathcal Q$ is concentrated on $C(\mathbb R_+;\mathcal M_\kappa)$, with the density $\rho$ being a weak solution to \eqref{eq:hydrodynamics} in the sense of Definition \ref{def:weak-sol}.
\end{theorem}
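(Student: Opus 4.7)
The plan is to follow the standard two-step approach: first establish tightness of $\{\mathcal Q^N\}_{N\ge1}$ in $\mathbb D(\mathbb R_+;\mathcal M)$, and then characterize every subsequential limit as being concentrated on weak solutions of \eqref{eq:hydrodynamics}. For tightness I would apply Mitoma's criterion to reduce to tightness of the real-valued processes $t\mapsto \langle \pi_t^N,g\rangle$ for $g\in C_c^\infty(\mathbb R)$, then use Aldous--Rebolledo on each. Dynkin's formula splits such a process into a martingale and a drift; the martingale vanishes in $L^2$ using $\eta(x)\le\kappa$ together with \eqref{eq:p-sum}, while the drift is controlled via (A1)--(A3) and the tail estimates. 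Concentration on $\mathcal M_\kappa$ follows because $\langle \pi_t^N,g\rangle\le \frac{\kappa}{N}\sum_x g(x/N)\to\kappa\int g$ for every nonnegative $g\in C_c$, forcing any limit to be absolutely continuous with density bounded by $\kappa$; continuity of trajectories is obtained from the fact that a single jump perturbs $\langle \pi^N,g\rangle$ by $O(1/N)$.

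For the characterization step, I would start from Dynkin's formula applied to $G\in C_c^{1,\infty}(\mathbb R_+\times\mathbb R)$. A direct computation gives
\begin{equation*}
N^\gamma\mathcal L\langle \pi^N,G_s\rangle = \frac{N^\gamma}{N}\sum_{x\ne y}\big[G_s(y/N)-G_s(x/N)\big]\mathfrak p(x,y)\,\eta(x)\big(\kappa-\eta(y)\big).
\end{equation*}
The crucial structural move is the homogeneity condition (A2) with $\lambda=N$, which yields $\mathfrak p(x,y)=N^{-(1+\gamma)}\mathfrak p(x/N,y/N)$, so that the drift rewrites as
\begin{equation*}
\frac{1}{N^2}\sum_{x\ne y} H_s\!\left(\tfrac xN,\tfrac yN\right)\eta(x)\big(\kappa-\eta(y)\big), \qquad H_s(u,v):=\big[G_s(v)-G_s(u)\big]\mathfrak p(u,v),
\end{equation*}
exactly the form amenable to the replacement strategy announced in the introduction. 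Since $G_s$ is smooth with compact support, $H_s$ inherits the local Lipschitz regularity of $\mathfrak p$ off the diagonal, while the factor $G_s(v)-G_s(u)$ tames the diagonal singularity.

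The main obstacle is the replacement lemma: substituting $\eta(x)(\kappa-\eta(y))$ by the block-averaged expression $\eta(x)^{\epsilon N}(\kappa-\eta(y)^{\epsilon N})$ with an error that vanishes as $N\uparrow\infty$ and $\epsilon\downarrow 0$. The key algebraic observation, already highlighted in the introduction, is that $(\mathsf s,\mathsf s')\mapsto \mathsf s(\kappa-\mathsf s')$ is affine in each of its arguments separately; telescoping the replacement therefore produces error terms that pair against discrete first-order increments of $H_s$, controllable through (A1). The contribution from sites close to the diagonal is absorbed by \eqref{eq:tail-1}, while the tail at infinity is handled by \eqref{eq:tail-2} combined with the compact support of $G_s$.

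Once the replacement is in place, the double sum becomes a Riemann-type sum converging, for any density $\rho_s$ associated to a limit point, to $\iint H_s(u,v)\rho_s(u)(\kappa-\rho_s(v))\,du\,dv=\langle \mathbb L\rho_s,G_s\rangle$, with integrability ensured by (A3). Combining this with the $L^2$-vanishing of the martingale and the initial hypothesis \eqref{eq:initial}, one obtains \eqref{eq:weak-sol} first for each fixed $G$ almost surely under $\mathcal Q$, and then for all $G\in C_c^{1,\infty}(\mathbb R_+\times\mathbb R)$ by taking a countable dense family, finishing the proof.
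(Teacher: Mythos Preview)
Your proposal is correct and follows essentially the same route as the paper: tightness via the standard criterion for measure-valued processes, vanishing of the Dynkin martingale, the homogeneity (A2) to absorb the $N^\gamma$ factor into $\mathfrak p(x/N,y/N)$, a cutting step near the diagonal and at infinity handled by \eqref{eq:tail-1}--\eqref{eq:tail-2}, and the central replacement lemma exploiting the bilinearity of $(\mathsf s,\mathsf s')\mapsto \mathsf s(\kappa-\mathsf s')$ to shift the averaging onto discrete increments of $H_s$, controlled by the local Lipschitz property (A1). The only point you leave implicit is that the cutting to a compact region $A_{R,\epsilon'}\subset\mathbb R_0^2$ must precede the replacement, since the Lipschitz bound on $H_s$ is only available away from the diagonal; the paper makes this ordering explicit by stating the cutting lemma (Lemma~\ref{lem:cutting}) before the replacement (Lemma~\ref{lem:medium-replace}).
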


\subsection{Proof of Theorem \ref{theo:hydrodynamics}}

The proof has the classical structure 
introduced in \cite{GPV} that consists of showing the tightness and identifying the limit point.
We remark that the proofs of all the lemmas in this section are postponed to Section \ref{sec:proof_lemmas}.
In the first lemma, we show the tightness and then characterize the minimal regularity of the limit points.

\begin{lemma}[Tightness]\label{lem:tight}
The sequence $\{\mathcal{Q}^N\}_{N\ge1}$ is tight and any weak limit point $\mathcal{Q}$ is concentrated on $C(\mathbb R_+;\mathcal M_\kappa)$.
In particular, with full $\mathcal Q$-probability, $\pi_t=\rho_t(u)\d u$ such that $0\le\rho_t(u)\le\kappa$ for all $t\ge0$ and almost every $u\in\mathbb R$.
\end{lemma}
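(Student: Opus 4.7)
The plan is to apply the standard tightness-via-Aldous strategy on test-function projections of the empirical measure, combined with a uniform upper bound on the densities that transfers to any limit point.

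First, because $\mathcal M$ carries the vague topology, it suffices to prove that, for every $g\in C_c^\infty(\mathbb R)$, the real-valued process $\{\langle \pi_t^N,g\rangle\}_{t\ge 0}$ is tight in $\mathbb D(\mathbb R_+;\mathbb R)$. The pointwise bound $|\langle \pi_t^N,g\rangle|\le \kappa \|g\|_1+O(N^{-1})$ gives the first Aldous condition. For the regularity condition, I would use the Dynkin martingale
\[
M_t^{N,g} = \langle \pi_t^N,g\rangle - \langle \pi_0^N,g\rangle - \int_0^t N^\gamma \mathcal L\langle \pi_s^N,g\rangle\,\d s.
\]
Using $\langle \pi^N(\eta^{x,y}),g\rangle - \langle \pi^N(\eta),g\rangle = N^{-1}(g(y/N)-g(x/N))$, the drift and predictable quadratic variation read
\[
N^\gamma\mathcal L\langle \pi^N,g\rangle = \frac{N^\gamma}{N}\sum_{x\ne y}\mathfrak p(x,y)\eta(x)(\kappa-\eta(y))\bigl(g(\tfrac yN)-g(\tfrac xN)\bigr),
\]
\[
\langle M^{N,g}\rangle_t = \int_0^t \frac{N^\gamma}{N^2}\sum_{x\ne y}\mathfrak p(x,y)\eta_s(x)(\kappa-\eta_s(y))\bigl(g(\tfrac yN)-g(\tfrac xN)\bigr)^2\,\d s.
\]

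Using $\mathfrak p\le 2\mathfrak s+|\mathfrak a|\le 4\mathfrak s$ and the Lipschitz estimate $|g(y/N)-g(x/N)|\le \min\{2\|g\|_\infty,\|g'\|_\infty |y-x|/N\}$, I would split both sums into three regimes according to $|y-x|$ relative to $\epsilon N$ and $mN$, restricted by the support of $g$ to $|x|\le RN$ (up to a symmetrization). The near-diagonal regime $|y-x|\le \epsilon N$ is controlled directly by \eqref{eq:tail-1}; the far regime $|y-x|\ge mN$ is controlled by \eqref{eq:tail-2}; and the bulk regime $\epsilon N\le |y-x|\le mN$ is handled by the scaling relation $\mathfrak s(x,y)=N^{-(1+\gamma)}\mathfrak s(x/N,y/N)$ from (A2), which turns the sum into a Riemann sum of $\iint \mathfrak s(u,v)|u-v|\,\d u\,\d v$ over a compact subset of $\mathbb R_0^2$, finite by \eqref{eq:integrability}. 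This yields a uniform bound $|N^\gamma\mathcal L\langle \pi^N,g\rangle|\le C(g)$; the analogous argument for the quadratic variation picks up an extra $N^{-1}$ factor (since one power of $|g(y/N)-g(x/N)|$ is replaced by $2\|g\|_\infty$ in the square), so $\langle M^{N,g}\rangle_t\to 0$. Aldous's criterion then follows by Markov's and Doob's inequalities, giving tightness of $\{\mathcal Q^N\}$.

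For the second assertion, I would observe that the jump sizes of $t\mapsto \langle \pi_t^N,g\rangle$ are bounded by $2\|g\|_\infty/N\to 0$, so by the standard criterion any weak limit point is concentrated on $C(\mathbb R_+;\mathcal M)$. Finally, for every interval $[a,b]$ and every $N$,
\[
\pi_t^N([a,b]) = \frac{1}{N}\sum_{x:\,x/N\in[a,b]}\eta_t(x)\le \kappa(b-a)+\frac{C}{N},
\]
uniformly in $t$; passing to the weak limit through an approximation of $\mathds 1_{[a,b]}$ by $C_c$ functions yields $\pi_t([a,b])\le \kappa(b-a)$ for all $t$ almost surely, hence $\pi_t$ is absolutely continuous with density bounded by $\kappa$, i.e.\ $\pi_t\in\mathcal M_\kappa$.

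The main technical obstacle is the uniform drift bound, which is where the long-range character of $\mathfrak p$ really bites: the three separate assumptions \eqref{eq:integrability}, \eqref{eq:tail-1}, \eqref{eq:tail-2} are tailored precisely to rule out, respectively, moderate-range pathologies, near-diagonal blow-up against the time scaling $N^\gamma$, and mass escaping to infinity. Getting the splitting into regimes clean, and checking that the homogeneity (A2) converts the discrete sum in the intermediate regime into a genuinely convergent double integral, is the step where care is required.
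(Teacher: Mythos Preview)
Your proposal is correct and follows essentially the same route as the paper's proof, which simply cites the abstract tightness criteria in \cite[Chapter 4]{KL99} and defers the verification of the oscillation condition to the ``standard method'' in \cite[Proposition 4.7]{SS18}. What you have written is precisely the content of that standard method: the Dynkin decomposition, the uniform bound on $N^\gamma\mathcal L\langle\pi^N,g\rangle$ via the three-regime splitting governed by \eqref{eq:tail-1}, \eqref{eq:tail-2}, and (A2)--(A3), the $N^{-1}$-vanishing of the predictable quadratic variation, and the jump-size and mass bounds to land in $C(\mathbb R_+;\mathcal M_\kappa)$.
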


Hereafter, assume Lemma \ref{lem:tight} and take a sub-sequential limit point $\mathcal{Q}$ of $\{\mathcal Q^N\}_{N\ge1}$.
Assume, for simplicity, that the whole sequence converges weakly to $\mathcal{Q}$.
By Lemma \ref{lem:tight}, one can identify $\mathcal{Q}$ with the probability measure on the density $\rho$ in the space given by \eqref{eq:sol-space}.
Our aim is to characterize, as much as possible, the density $\rho$ under $\mathcal Q$.
To this end, fix any $G \in C_c^{1,\infty}(\mathbb R_+\times \mathbb R)$ and note that
\begin{equation}\label{eq:martingale}
M_t^N(G) := \langle \pi_t^N,G_t \rangle - \langle \pi_0^N,G_0 \rangle - \int_0^t \big[\langle \pi_s^N,\partial_sG_s \rangle + N^\gamma\mathcal L \langle \pi_s^N, G_s \rangle\big]\,\d s
\end{equation}
defines a martingale under $\mathcal Q^N$ with respect to the natural filtration of the process.
Suppose that the support of $G$ is contained in $[0,T)\times(-R,R)$ for $T$, $R>0$. Now, for any $\eta\in\Omega$,
\begin{equation}
N^\gamma\mathcal L \langle \pi^N(\eta), G_t \rangle = \frac1{N^2}\sum_{(x,y) \in NA_R} H_{t,x,y}^N  \eta(x)\big(\kappa - \eta(y)\big),
\end{equation}
where $A_R:=\mathbb R^2\setminus\{(u,v)\;:\;|u|,|v| \ge R\}$ and $H_{t,x,y}^N=H_t(\tfrac xN,\tfrac yN)$ with 
\begin{equation}\label{eq:h}
H_t(u,v) := \mathfrak{p}(u,v)\big[G_t(v)-G_t(u)\big].
\end{equation}

Our aim is to obtain \eqref{eq:weak-sol} from the limit of \eqref{eq:martingale}.
To do that, we need several intermediate lemmas.
Recall that $\mathbb P_{\mu_N}$ is the distribution of $\{\eta_t\}_{t\ge0}$ in $\mathbb D(\mathbb R_+;\mathcal M)$.

\begin{lemma}\label{lem:martingale}
For any $\delta>0$,
\begin{equation}
\lim_{N\uparrow\infty} \mathbb P_{\mu_N} \left( \sup_{t\in[0,T]} |M_t^N(G)| > \delta \right) = 0.
\end{equation}
\end{lemma}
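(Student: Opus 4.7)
The plan is to control the martingale via Doob's maximal inequality and show that its predictable quadratic variation goes to zero in expectation. By the $L^2$ Doob inequality, it suffices to prove that $\mathbb E_{\mu_N}[\langle M^N(G)\rangle_T] \to 0$ as $N \to \infty$.

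Since $F_s(\eta) := \langle \pi^N(\eta), G_s\rangle$ is linear in $\eta$ and $F_s(\eta^{x,y}) - F_s(\eta) = N^{-1}(G_s(y/N) - G_s(x/N))$, the quadratic variation of the jump martingale reads
\begin{equation*}
\langle M^N(G)\rangle_t = \int_0^t \frac{N^\gamma}{N^2} \sum_{(x,y)} \mathfrak{p}(x,y)\, \eta_s(x)\big(\kappa - \eta_s(y)\big)\, \big(G_s(y/N) - G_s(x/N)\big)^2 \,\d s.
\end{equation*}
Using $\eta(x)(\kappa-\eta(y)) \le \kappa^2$ uniformly, symmetrising the kernel to replace $\mathfrak{p}$ by $\mathfrak{s}$ (since the squared increment is symmetric), and the smoothness and compact support of $G$ — which yield $(G_s(y/N) - G_s(x/N))^2 \le C_G \min\{|y-x|/N, 1\}^2$ together with $|x| \le RN$ (up to symmetry) — I would reduce matters to bounding the deterministic quantity
\begin{equation*}
I_N := \frac{N^\gamma}{N^2} \sum_{|x|\le RN} \sum_{y\ne x} \mathfrak{s}(x,y)\, \min\!\big\{|y-x|/N, 1\big\}^2.
\end{equation*}

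The remaining work is to split the inner sum over $y$ according to the magnitude of $|y-x|$. For $|y-x| \le \epsilon N$, the square bound $(|y-x|/N)^2 \le \epsilon |y-x|/N$ lets one pull out an $\epsilon/N$ prefactor and apply the diagonal tail estimate \eqref{eq:tail-1}. For $|y-x| \ge mN$, the constant bound $\min\{\cdot,1\}^2 \le 1$ yields, after extracting $1/N$, exactly the quantity controlled by the far tail estimate \eqref{eq:tail-2}. In the intermediate regime $\epsilon N < |y-x| < mN$, homogeneity $\mathfrak{s}(x,y) \lesssim |y-x|^{-1-\gamma}$ together with $|y-x|/N \le m$ allows a direct comparison with the integral in \eqref{eq:integrability}, showing this part is $O(N^{-1})$ up to constants depending on $\epsilon, m, G$. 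Each piece tends to zero in the double limit $\limsup_{m,\epsilon} \limsup_N$, so $I_N \to 0$ and Doob's inequality concludes the lemma.

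The main technical point is the interplay near the diagonal: the jump kernel $\mathfrak{s}(x,y)$ is highly singular as $|x-y| \to 0$, and one must exploit the full quadratic gain from $(G_s(y/N) - G_s(x/N))^2$ (rather than just a linear Lipschitz bound) to balance the singularity against the $N^\gamma$ time acceleration. This is precisely why the assumption \eqref{eq:tail-1} is designed with a first moment of $|x-y|$: squaring the increment of $G$ supplies a factor $(|y-x|/N)^2$, which when paired with $|y-x| \le \epsilon N$ reduces to the hypothesis with an extra small factor $\epsilon/N$.
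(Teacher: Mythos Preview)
Your approach is correct and matches the paper's own: compute the predictable quadratic variation, bound it deterministically using the smoothness and compact support of $G$, and conclude via Doob's maximal inequality—the paper simply leaves the resulting estimate to the reader, whereas you supply a three-regime decomposition. One minor point: the bound $\mathfrak s(x,y)\lesssim|y-x|^{-1-\gamma}$ in the intermediate regime does not follow from homogeneity (A2) alone; you also need local continuity (A1) together with the observation that $|x|\le RN$ and $\epsilon N<|y-x|<mN$ force the rescaled pair $(x/|y-x|,\,y/|y-x|)$ into a compact subset of $\{|u-v|=1\}$ (cf.\ Remark~\ref{rem:suff-cond}), after which your argument goes through.
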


To make our argument more general, we state the next two lemmas for an arbitrary sequence of probability distributions $\mathbb P_N$ on $\mathbb D(\mathbb{R}_+;\mathbb N_0^\mathbb Z)$ such that
\begin{equation}\label{eq:bound-l2}
\limsup_{N\uparrow\infty} \left\{ \sup_{x\in\mathbb Z}\,\mathbb E_N \left[ \int_0^T \big|\eta_t(x)\big|^2\,\d t \right] \right\} < \infty, \qquad \forall\,T>0,
\end{equation}
where $\mathbb E_N$ denotes the expectation w.r.t.~$\mathbb P_N$.
As the configurations of the generalized exclusion process are uniformly bounded by $\kappa$, $\mathbb P_{\mu_N}$ satisfies the aforementioned condition in a trivial way, and the lemmas below hold without the time integral and uniformly in $\eta\in\Omega$.

\begin{lemma}\label{lem:cutting}
Assume \eqref{eq:bound-l2}.
For any $R>0$,
\begin{equation}\label{eq:cutting}
\limsup_{\epsilon'\downarrow0,N\uparrow\infty} \mathbb E_N \left[ \frac1{N^2}\int_0^T \sum_{(x,y) \in N(A_R \setminus A_{R,\epsilon'})} H_{t,x,y}^N  \eta_t(x)\big(\kappa-\eta_t(y)\big) \,\d t \right] = 0,
\end{equation}
where $A_{R,\epsilon'} := \{(u,v) \in A_R\;:\;\epsilon' < |u-v| < (\epsilon')^{-1}\}$.
\end{lemma}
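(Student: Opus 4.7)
The plan is to decouple the randomness from the analytic estimate on the kernel. First, observe that $|\eta(x)(\kappa-\eta(y))|\le \kappa\,\eta(x)+\tfrac12\eta(x)^2+\tfrac12\eta(y)^2$, so combining the $L^2$ assumption \eqref{eq:bound-l2} with Cauchy--Schwarz yields
\[
C_1 := \sup_N\,\sup_{x,y\in\mathbb Z}\int_0^T\mathbb E_N\bigl[|\eta_t(x)(\kappa-\eta_t(y))|\bigr]\,\d t <+\infty.
\]
Since $G\in C^{1,\infty}_c$, the quantity $\bar H^N_{x,y}:=\sup_{t\in[0,T]}|H^N_{t,x,y}|$ is finite and controls $|H^N_{t,x,y}|$ uniformly in $t$. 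Tonelli's theorem then bounds the absolute value of the left-hand side of \eqref{eq:cutting} by $C_1\,\mathfrak H_N(\epsilon')$, where
\[
\mathfrak H_N(\epsilon') := \frac{1}{N^2}\sum_{(x,y)\in N(A_R\setminus A_{R,\epsilon'})}\bar H^N_{x,y},
\]
and it suffices to show $\mathfrak H_N(\epsilon')\to 0$ in the double limit.

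To control $\bar H^N_{x,y}$, I would use the regularity of $G$: since $G_t$ is Lipschitz with support in $(-R,R)$, one has $|G_t(v)-G_t(u)|\lesssim_G \min(1,|v-u|)$ uniformly in $t$. Inserting this into \eqref{eq:h} and applying the homogeneity \eqref{eq:gamma} in the form $\mathfrak p(x/N,y/N)=N^{1+\gamma}\mathfrak p(x,y)$ gives
\[
\bar H^N_{x,y}\lesssim_G N^{1+\gamma}\,\mathfrak p(x,y)\,\min\bigl(1,\tfrac{|x-y|}N\bigr).
\]
Both $A_R\setminus A_{R,\epsilon'}$ and this upper bound are symmetric under $(x,y)\leftrightarrow(y,x)$, so relabeling allows me to replace $\mathfrak p$ by the symmetric part $\mathfrak s$. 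Extracting the constraint $|x|<RN$ from $(x,y)\in NA_R$ at the cost of a factor $2$ yields
\[
\mathfrak H_N(\epsilon')\lesssim_G \frac{N^\gamma}{N^2}\sum_{|x|<RN}\sum_{y\neq x}\mathfrak s(x,y)\,\min(N,|x-y|)\,\mathbf 1_{\{|x-y|\le\epsilon'N\}\cup\{|x-y|\ge N/\epsilon'\}}.
\]

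The final step is to split this bound into short- and long-range contributions. On $\{|x-y|\le\epsilon'N\}$ we have $\min(N,|x-y|)=|x-y|$, so the corresponding piece vanishes in the limit $\epsilon'\downarrow 0,\ N\uparrow\infty$ by the tail assumption \eqref{eq:tail-1}. On $\{|x-y|\ge N/\epsilon'\}$ we have $\min(N,|x-y|)=N$, cancelling one factor of $N$; combining $|x-y|\ge N/\epsilon'$ with $|x|<RN$ forces $|y|\ge N/(2\epsilon')$ whenever $\epsilon'<1/(2R)$, so this piece is dominated by \eqref{eq:tail-2} applied with $m=1/(2\epsilon')\uparrow\infty$. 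The only delicate point is the bookkeeping — converting the probabilistic $L^2$-type bound into a factored, uniform-in-time spatial sum and then matching the two regions of that sum precisely to the prescribed tail estimates \eqref{eq:tail-1}--\eqref{eq:tail-2}; beyond this, no additional ingredient is required.
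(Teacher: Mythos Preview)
Your proof is correct and follows essentially the same route as the paper: bound $|\eta_t(x)(\kappa-\eta_t(y))|$ by a quadratic expression to invoke \eqref{eq:bound-l2}, control $|H^N_{t,x,y}|$ via the Lipschitz property and compact support of $G$ together with the homogeneity \eqref{eq:gamma}, symmetrize to pass from $\mathfrak p$ to $\mathfrak s$, and then split into short- and long-range pieces handled by \eqref{eq:tail-1} and \eqref{eq:tail-2} respectively. The only cosmetic differences are your explicit naming of $C_1$ and $\mathfrak H_N(\epsilon')$ and the slightly different cutoff for the long-range region ($|y|\ge N/(2\epsilon')$ versus the paper's $|y|\ge(1/\epsilon'-R)N$).
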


The next lemma shows that the current field associated to medium-distance jumps can be replaced by a function of the macroscopic density profile.
It is the key step in the characterization of the limit points.

\begin{lemma}\label{lem:medium-replace}
Assume \eqref{eq:bound-l2}.
For any $R>0$ and $\epsilon'>0$,
\begin{equation}\label{eq:medium-replace}
\begin{aligned}
\limsup_{\epsilon\downarrow0,N\uparrow\infty} \mathbb E_N &\left[ \frac1{N^2}\int_0^T \left| \sum_{(x,y) \in NA_{R,\epsilon'}} H_{t,x,y}^N \cdot \Big(\eta_t(x)\big(\kappa - \eta_t(y)\big) \right.\right.\\
&\qquad\left.\left.\vphantom{\sum_{(x,y) \in NA_{R,\epsilon'}}} - \eta_t(x)^{\epsilon N}\big(\kappa - \eta_t(y)^{\epsilon N}\big)\Big) \right| \,\d t \right] = 0.
\end{aligned}
\end{equation}
\end{lemma}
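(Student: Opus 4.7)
The plan is to exploit the affine structure of $(\mathsf{s},\mathsf{s}')\mapsto \mathsf{s}(\kappa-\mathsf{s}')$ highlighted in the introduction. Writing $\bar\eta_t(x) := \eta_t(x)^{\epsilon N}$ for brevity, the elementary identity
\[
\mathsf{s}(\kappa-\mathsf{s}') - \bar{\mathsf{s}}(\kappa-\bar{\mathsf{s}}') = (\mathsf{s}-\bar{\mathsf{s}})(\kappa-\mathsf{s}') - \bar{\mathsf{s}}(\mathsf{s}'-\bar{\mathsf{s}}')
\]
splits the integrand into two structurally identical pieces, so I focus on bounding
\[
\mathcal{E}_N(t) := \frac1{N^2}\sum_{(x,y)\in NA_{R,\epsilon'}} H_{t,x,y}^N\big(\eta_t(x)-\bar\eta_t(x)\big)\big(\kappa-\eta_t(y)\big),
\]
the second term being treated analogously with the roles of $x$ and $y$ exchanged.

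Next I unfold the block average $\eta_t(x)-\bar\eta_t(x) = |\Gamma_{\epsilon N}|^{-1}\sum_{z\in\Gamma_{\epsilon N}}(\eta_t(x)-\eta_t(x+z))$ and perform a discrete summation by parts (i.e.~the translation $x\mapsto x-z$) to transfer the discrete derivative from $\eta$ onto $H$:
\[
\mathcal E_N(t) = \frac{1}{|\Gamma_{\epsilon N}|\,N^2}\sum_{z\in\Gamma_{\epsilon N}}\sum_{(x,y)}(H_{t,x,y}^N-H_{t,x-z,y}^N)\,\eta_t(x)(\kappa-\eta_t(y)) + \mathcal B_N(t),
\]
where the boundary correction $\mathcal B_N(t)$ collects the discrepancy between $NA_{R,\epsilon'}$ and its translates by $(z,0)$.

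Two ingredients then close the estimate. \emph{(i)} By (A1), $H_t(u,v)=\mathfrak{p}(u,v)[G_t(v)-G_t(u)]$ is Lipschitz on every compact subset of $\mathbb R_0^2$; the region $A_{R,\epsilon'}$ is bounded and satisfies $|u-v|\ge\epsilon'$. For $\epsilon<\epsilon'/2$, any shift of size $\le\epsilon$ keeps the arguments inside a compact set with $|u-v|\ge\epsilon'/2$, so $|H_{t,x,y}^N-H_{t,x-z,y}^N|\le C(\epsilon',R,G,\mathfrak{p})\cdot|z|/N$. \emph{(ii)} Under \eqref{eq:bound-l2}, Cauchy--Schwarz yields $\sup_{x,y}\mathbb E_N[\int_0^T|\eta_t(x)(\kappa-\eta_t(y))|\,\d t]<\infty$ uniformly in $N$. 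Since the number of lattice points in $NA_{R,\epsilon'}$ is $O(N^2)$, the Lipschitz term contributes
\[
\frac{C(\epsilon')}{|\Gamma_{\epsilon N}|\,N^3}\sum_{z\in\Gamma_{\epsilon N}}|z|\cdot O(N^2) = O(\epsilon),
\]
while the boundary set has cardinality $O(|z|N)$ with $|H|$ bounded by $C(\epsilon')$ on it, yielding an $O(\epsilon)$ contribution after the $|\Gamma_{\epsilon N}|^{-1}$ normalization.

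The main obstacle is the careful bookkeeping of the boundary corrections in $\mathcal B_N(t)$ and ensuring the Lipschitz constant of $H$ is uniform along the shifts. This is exactly what the restriction to $A_{R,\epsilon'}$ (boundedness together with positive distance from the diagonal $\{u=v\}$) and the order of limits $\limsup_{\epsilon\downarrow0}\limsup_{N\uparrow\infty}$ buy us; after taking expectation and integrating in $t$, the resulting bound $C(\epsilon',R,G,\mathfrak{p})\,\epsilon$ vanishes as $\epsilon\downarrow0$, which is exactly \eqref{eq:medium-replace}.
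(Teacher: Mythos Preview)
Your proof is correct and follows essentially the same approach as the paper: exploit the affine structure, translate the summation variables to move the discrete derivative onto $H$, then bound the Lipschitz term and the boundary (symmetric-difference) term separately using that $A_{R,\epsilon'}$ is a compact subset of $\mathbb R_0^2$. The only cosmetic difference is that the paper uses bilinearity to write the difference as a single double average over $(z,z')\in\Gamma_{\epsilon N}^2$ and performs one two-variable shift, whereas you first split via the identity $\mathsf{s}(\kappa-\mathsf{s}')-\bar{\mathsf{s}}(\kappa-\bar{\mathsf{s}}')=(\mathsf{s}-\bar{\mathsf{s}})(\kappa-\mathsf{s}')-\bar{\mathsf{s}}(\mathsf{s}'-\bar{\mathsf{s}}')$ and then do two separate one-variable shifts; both routes land on the same $\mathcal I_1+\mathcal I_2$ type estimate.
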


Now, we prove Theorem \ref{theo:hydrodynamics} based on the above lemmas.

\begin{proof}[Proof of Theorem \ref{theo:hydrodynamics}]
Recall that $G$ is an arbitrary test function in $C_c^{1,\infty}(\mathbb R_+\times \mathbb R)$ whose support is contained in $[0,T)\times(-R,R)$.
By standard density arguments, we only need to show for any $\delta>0$ that
\begin{equation}
\begin{aligned}
\mathcal{Q} &\left( \left\vert \langle \rho_\circ, G_0\rangle + \int_0^T\langle \rho_t, \partial_tG_t \rangle \,\d t \right.\right.\\
&\quad\left.\left. + \int_0^T \d t \iint_{A_R} H_t(u,v) \rho_t(u)\big(\kappa - \rho_t(v)\big) \,\d{v}\,\d{u} \right\vert > \delta \right) = 0,
\end{aligned}
\end{equation}
where $H_t(u,v)$ is the function introduced in \eqref{eq:h}.
Notice that 
\[
|H_t(u,v)| \lesssim \mathfrak{p}(u,v)\min\{|u-v|,1\}.
\]
Condition (A3) then yields that it suffices to show
\begin{equation}\label{eq:tobeshown}
\begin{aligned}
\limsup_{\epsilon'\downarrow0} \mathcal{Q} &\left( \left\vert \vphantom{\iint_{A_{R,\epsilon'}}} \langle \rho_\circ, G_0\rangle + \int_0^T\langle \rho_t, \partial_tG_t \rangle \,\d t\right.\right.\\
&\quad\left.\left. + \int_0^T \d t \iint_{A_{R,\epsilon'}} H_t(u,v) \rho_t(u)\big(\kappa-\rho_t(v)\big) \,\d{v}\,\d{u}\right\vert > \delta \right) = 0.
\end{aligned}
\end{equation}
For each fixed $t\ge0$, from the Lebesgue differentiation theorem
\begin{equation}
\rho_t(u) = \lim_{\epsilon\downarrow 0} \langle \pi_t, I_u^{\epsilon}\rangle, \qquad \text{a.e.},
\end{equation}
where $I_u^{\epsilon} := (2\epsilon)^{-1}\mathds{1}_{[u-\epsilon, u+\epsilon]}$.
Since $\rho_t$ is bounded, the convergence above also holds in $L_\mathrm{loc}^2(\mathbb R)$.
Then, \eqref{eq:tobeshown} follows as soon as
\begin{equation}\label{eq:tobeshown2}
\begin{aligned}
\limsup_{\epsilon'\downarrow0,\epsilon\downarrow 0} \mathcal{Q}&\left( \left\vert \vphantom{\iint_{A_{R,\epsilon'}}} \langle \pi_0, G_0 \rangle + \int_0^T \langle \pi_t, \partial_tG_t \rangle\,\d t \right.\right.\\
&\quad\left.\left.+\int_0^T \d t\iint_{A_{R,\epsilon'}} H_t(u,v) \langle \pi_t, I_u^\epsilon \rangle\big(\kappa-\langle\pi_t, I_v^\epsilon \rangle\big) \,\d{v}\,\d{u} \right\vert > \delta \right) = 0.
\end{aligned}
\end{equation}
As $\{\mathcal{Q}^N\}_{N\geq 1}$ converges weakly to $\mathcal{Q}$, \eqref{eq:tobeshown2} holds if\footnote{Here we can apply the Portmanteau Theorem since $I_u^\epsilon$ can be approximated by continuous functions in $L^2(\mathbb R)$.}
\begin{equation}\label{eq:tobeshown3}
\begin{aligned}
\limsup_{\epsilon'\downarrow 0,\epsilon\downarrow 0,N\uparrow\infty} \mathcal{Q}^N &\left( \left\vert \vphantom{\iint_{A_{R,\epsilon'}}} \langle \pi_0^N, G_0 \rangle + \int_0^T \langle \pi_t^N, \partial_tG_t \rangle\,\d t \right.\right.\\
&\quad\left.\left.+\int_0^T \d t\iint_{A_{R,\epsilon'}}  H_t(u,v) \langle \pi_t^N, I_u^\epsilon \rangle\big(\kappa-\langle \pi_t^N, I_v^\epsilon \rangle\big) \,\d{v}\,\d{u} \right\vert > \delta \right) = 0.
\end{aligned}
\end{equation}
Furthermore, note that for any $t\ge0$ and $\epsilon>0$,
\begin{equation}
\lim_{N\to\infty} \sup_{u\in[\frac xN,\frac{x+1}N)} \left| \big\langle \pi_t^N,I_u^\epsilon \big\rangle - \eta_t(x)^{\epsilon N} \right| = 0 \qquad \text{uniformly in $x\in\mathbb Z$}.
\end{equation}
Observe that we may replace the integral on $A_{R,\epsilon'}$ by the corresponding Riemann sum with an error that vanishes as $N\to\infty$.
Hence, \eqref{eq:tobeshown3} is reduced to
\begin{equation}
\begin{aligned}
\limsup_{\epsilon'\downarrow 0,\epsilon\downarrow 0,N\uparrow\infty} \mathcal{Q}^N &\left( \left\vert \langle \pi_0^N, G_0 \rangle + \int_0^T \langle \pi_t^N, \partial_tG_t \rangle\,\d t \vphantom{\sum_{(x,y) \in NA_{R,\delta}}} \right.\right.\\
&\quad\left.\left.+ \frac1{N^2}\int_0^T \sum_{(x,y) \in NA_{R,\epsilon'}} H_{t,x,y}^N  \eta_t(x)^{\epsilon N}\big(\kappa-\eta_t(y)^{\epsilon N}\big) \,\d t \right\vert > \delta \right) = 0.
\end{aligned}
\end{equation}
This follows from \eqref{eq:martingale}, Lemma \ref{lem:martingale}, and Lemmas \ref{lem:cutting} and \ref{lem:medium-replace} applied to $\mathbb P_{\mu_N}$.
\end{proof}

\subsection{Proof of the lemmas}\label{sec:proof_lemmas}

\begin{proof}[Proof of Lemma \ref{lem:tight}]
By \cite[Chapter 4, Theorem 1.3 and Proposition 1.7]{KL99}, tightness follows from the fact that for any $g \in C_c^\infty(\mathbb R)$ and any $\delta>0$ that
\begin{align}
\label{eq:tight-pf-1}
&\sup_{N\ge1} \mathcal Q_N \left\{ \sup_{t\ge0} \big|\langle \pi_t^N,g \rangle\big| \le \kappa\int_\mathbb R |g(u)|\,\d u \right\} = 1,\\
\label{eq:tight-pf-2}
&\limsup_{\epsilon\downarrow0,N\uparrow\infty} \mathcal Q_N \left\{ \sup_{|t-s|<\varepsilon} \big|\langle \pi_t^N,g \rangle - \langle \pi_s^N,g \rangle\big| > \delta \right\} = 0.
\end{align}
Criteria \eqref{eq:tight-pf-1} holds since the occupation variables are bounded by $\kappa$, while \eqref{eq:tight-pf-2} can be shown with the standard method, see, e.g.~\cite[Proposition 4.7]{SS18}.
We omit the details here.
Moreover, \eqref{eq:tight-pf-1} and \eqref{eq:tight-pf-2} also yield that any limit point is concentrated on the paths $\{\pi_t\}_{t\ge0}$ which are continuous in $t$ and satisfy
\begin{equation}
\big|\langle \pi_t,g \rangle\big| \le \kappa\int_\mathbb R |g(u)|\,\d u, \qquad \forall\,t\ge0.
\end{equation}
Therefore, $\pi_t(du)=\rho(t,u)\d u$ with density $|\rho(t,u)|\le\kappa$ for each $t\ge0$ and almost every $u\in\mathbb R$.
As $\rho$ is non-negative, the lemma follows.
\end{proof}

\begin{proof}[Proof of Lemma \ref{lem:martingale}]
The quadratic variation of the martingale is given by
\begin{align*}
\langle M^N (G)\rangle_t &= N^\gamma\int_0^t \mathcal L^N \big[\langle \pi_s^N, G_s \rangle^2\big]-2\langle \pi_s^N, G_s \rangle\mathcal L^N \langle \pi_s^N, G_s \rangle\,\d s\\
&= \frac{1}{N} \int_0^t \sum_{(x,y)\in\mathbb Z^2} \mathfrak{p}\left(\frac{x}{N},\frac{y}{N}\right)\eta(x)\big(\kappa-\eta(y)\big) \left[ G_s \left( \frac yN \right) - G_s \left( \frac xN \right) \right]^2\,\d s.
\end{align*}
Recalling that $G$ is compactly supported and smooth, the lemma follows from Doob's inequality.
We leave the missing details to the reader.
\end{proof}

\begin{proof}[Proof of Lemma \ref{lem:cutting}]
Since $\big|\eta_t(x)\big(\kappa - \eta_t(y)\big)\big| \lesssim 1+\eta_t(x)^2+\eta_t(y)^2$ and
\begin{equation}
\big| H_{t,x,y}^N \big| \lesssim_G  \mathfrak p \left( \frac xN,\frac yN \right)  \min \left\{ \frac{|y-x|}N,1 \right\},
\end{equation}
the expression inside the expectation can be bounded from above by a constant times
\begin{equation}
\begin{aligned}
\frac1{N^2} \sum_{\substack{(x,y) \in NA_R\\|y-x|\le\epsilon'N}} \mathfrak p \left( \frac xN,\frac yN \right) \frac{|y-x|}N + \frac1{N^2} \sum_{\substack{(x,y) \in NA_R\\|y-x|\ge\frac1{\epsilon'}N}} \mathfrak p \left( \frac xN,\frac yN \right).
\end{aligned}
\end{equation}
Using condition (A2), we get for $\epsilon'$ small enough the further upper bound
\begin{equation}
\begin{aligned}
\frac{2N^\gamma}{N^2} \sum_{\substack{|x|<RN\\|y-x|\le\epsilon'N}} \mathfrak{s}(x,y)|y-x| + \frac{2N^\gamma}N \sum_{\substack{|x|<RN\\|y|\ge(\frac1{\epsilon'}-R)N}} \mathfrak{s}(x,y).
\end{aligned}
\end{equation}
The limit then holds due to \eqref{eq:tail-1} and \eqref{eq:tail-2}.
\end{proof}
%

\begin{proof}[Proof of Lemma \ref{lem:medium-replace}]
Recall that $\Gamma_\ell=\{w\in\mathbb Z\;:\;|w|\le\ell\}$.
Since $\mathsf{s}(\kappa - \mathsf{s}')$ is affine both in $\mathsf{s}$ and $\mathsf{s}'$, the sum on the left-hand side of \eqref{eq:medium-replace} is equal to
\begin{equation}\label{eq:using_affine}
\frac1{|\Gamma_{\epsilon N}|^2} \sum_{(x,y) \in NA_{R,\epsilon'}} H_{t,x,y}^N \sum_{z,z'\in\Gamma_{\epsilon N}} \Big[\eta_t(x)\big(\kappa - \eta_t(y)\big) - \eta_t(x+z)\big(\kappa - \eta_t(y+z')\big)\Big].
\end{equation}
Denote by $N\Delta_{R,\epsilon'}(z,z')$ the symmetric difference between $NA_{R,\epsilon'}$ and $NA_{R,\epsilon'}+(z,z')$.
Performing the change of variable $(x,y)\mapsto (x-z,y-z)$, using \eqref{eq:bound-l2} and the expression above, we can bound the left-hand side of \eqref{eq:medium-replace} by $C_T(\mathcal I_1+\mathcal I_2)$, where
\begin{equation}
\begin{aligned}
\mathcal I_1 &:= \limsup_{\epsilon\downarrow0,N\uparrow\infty} \left\{ \sup_{t\in[0,T]} \sup_{z,z'\in\Gamma_{\epsilon N}} \frac1{N^2}\sum_{(x,y) \in NA_{R,\epsilon'}} \big|H_{t,x,y}^N - H_{t,x-z,y-z'}^N\big| \right\};\\
\mathcal I_2 &:= \limsup_{\epsilon\downarrow0,N\uparrow\infty} \left\{ \sup_{t\in[0,T]} \sup_{z,z'\in\Gamma_{\epsilon N}} \frac1{N^2}\sum_{(x,y) \in N\Delta_{R,\epsilon'}(z,z')} \big|H_{t,x-z,y-z'}^N\big| \right\}.
\end{aligned}
\end{equation}
Let $\|G\|_\infty$ denote the uniform norm of $G$. 
When $\epsilon$ is sufficiently small, from the definition of $H_{t,x,y}^N$ in \eqref{eq:h},
\begin{equation}
\big|H_{t,x-z,y-z'}^N\big| \lesssim \sup \left\{ \mathfrak{p}(u,v)\;:\;(u,v) \in A_{R+1,\frac{\epsilon'}2} \right\} \Vert G\Vert_\infty,
\end{equation}
for all $z$, $z'\in\Gamma_{\epsilon N}$, $(x,y) \in N\Delta_{R,\epsilon'}(z,z')$ and $t\in[0,T]$, so that
\begin{equation}
\mathcal I_2 \lesssim_{R,\epsilon'} \limsup_{\epsilon\downarrow0,N\uparrow\infty} \left\{ \frac1{N^2}\sup_{z,z'\in\Gamma_{\epsilon N}} \#\big(N\Delta_{R,\epsilon'}(z,z')\big) \right\} = 0.
\end{equation}
For $\mathcal I_1$, recall the condition (A1).
Then,
\begin{equation}
\begin{aligned}
\big|H_{t,x,y}^N-H_{t,x-z,y-z'}^N\big| &\lesssim \mathfrak p \left( \frac xN,\frac yN \right) \epsilon\|\partial_uG\|_\infty\\
&\qquad + \left| \mathfrak p \left( \frac xN,\frac yN \right) - \mathfrak p \left( \frac{x-z}N,\frac{y-z'}N \right) \right| \|G\|_\infty\\
&\lesssim \sup_{A_{R,\epsilon'}} \mathfrak{p}(u,v) \epsilon\|\partial_uG\|_\infty + \epsilon L_{R,\epsilon'} \|G\|_\infty,
\end{aligned}
\end{equation}
for all $z$, $z'\in\Gamma_{\epsilon N}$, $(x,y) \in NA_{R,\epsilon'}$ and $t\in[0,T]$, where $L_{R,\epsilon'}$ is the Lipschitz constant of $p$ on a small enlargement of $A_{R,\epsilon'}$ that is still compact in $\mathbb R^2_0$.
Therefore,
\begin{equation}
\mathcal I_1 \lesssim_{R,\epsilon'} \limsup_{\epsilon\downarrow0} \left\{ \epsilon \limsup_{N\uparrow\infty} \frac1{N^2}\#\big(NA_{R,\epsilon'}\big) \right\} = 0.
\end{equation}
We then conclude the proof.
\end{proof}

\section{Generalized exclusion with boundary reservoirs}\label{sec:gen_exclusion_boundary}

For the entirety of this section, we consider the spatial domain $I = [0,1]$ and refer to its microscopic pendant $\Lambda_N = \{1, \dots, N-1\}$ as the \emph{bulk}.
Recall that $\Omega_N = \{0,\dots, \kappa\}^{\Lambda_N}$.

The particles move inside the bulk as the generalized exclusion restricted to $\Lambda_N$, with generator acting on $f:\Omega_N\to\mathbb R$ as
\begin{equation}
\mathcal L_\mathrm{bulk}^Nf(\eta) = \sum_{x,y\in\Lambda_N} \mathfrak{p}(x,y)\eta(x)\big(\kappa-\eta(y)\big)  \Big( f(\eta^{x,y}) - f(\eta) \Big).
\end{equation}
The remaining sites $\{z\in\mathbb Z\;:\;z \le 0\}$ (resp.~$\{z\in\mathbb Z\;:\;x \ge N\}$) are called the left (resp.~right) reservoirs.
The interaction between sites in the bulk and the reservoirs is generated by $\mathcal L_\mathrm{bd}^N$ acting on $f:\Omega_N\to\mathbb R$ as
\begin{equation}
\begin{aligned}
\mathcal L_\mathrm{bd}^Nf(\eta) = N^\theta\sum_{x\in\Lambda_N} &\left[ \big(a_-r_{-,x}+a_+r_{+,x}^N\big) \big(\kappa-\eta(x)\big) \Big(f(\eta^{x,+})-f(\eta)\Big) \right.\\
&\left. +\big(b_-r_{x,-}+b_+r_{x,+}^N\big) \eta(x) \Big(f(\eta^{x,-})-f(\eta)\Big) \right],
\end{aligned}
\end{equation}
where $\theta\le0$, $a_\pm$, $b_\pm$ are non-negative constants, and
\begin{equation}\label{eq:bd-rate-1}
\begin{aligned}
&r_{-,x}:=\sum_{y \le 0} \mathfrak{p}(y,x), \qquad r_{x,-}:=\sum_{y \le 0} \mathfrak{p}(x,y),\\
&r_{+,x}^N:=\sum_{y \ge N} \mathfrak{p}(y,x), \qquad r_{x,+}^N:=\sum_{y \ge N} \mathfrak{p}(x,y).
\end{aligned}
\end{equation}
When $\theta<0$, the factor $N^\theta$ regulates the strength of reservoir dynamics by slowing down the exchange of particles between the bulk system and the reservoirs.


\subsection{Assumptions on the jump kernel}

As in Section \ref{sec:gen-ex}, we assume that the jump kernel $\mathfrak{p}=\mathfrak{p}(x,y)$ can be extended to a function on $\mathbb R_0^2$ satisfying the conditions (A1) and (A2).
Recall from \eqref{eq:sym_p} the symmetric part $\mathfrak{s}$ of $\mathfrak p$ and also define its anti-symmetric part as
\begin{equation}
\mathfrak a(u,v) := \frac{\mathfrak{p}(u,v)-\mathfrak{p}(v,u)}2 = -\mathfrak a(v,u) .
\end{equation}
Throughout this section, we assume $\gamma\in(0,1)$ and the following growth condition: there are universal constants $c_1$, $c_2>0$, such that
\begin{equation}\label{eq:growth}
\frac{c_1}{|u-v|^{1+\gamma}} \le |\mathfrak{a}(u,v)| \le \mathfrak{s}(u,v) \le \frac{c_2}{|u-v|^{1+\gamma}}, \qquad \forall\,(u,v)\in\mathbb R_0^2.
\end{equation}
In particular, $|\mathfrak p(u,v)|\lesssim|u-v|^{-1-\gamma}$, and the rates in \eqref{eq:bd-rate-1} are finite.

For each site $x\in\Lambda_N$, define
\begin{equation}\label{eq:bd-rate-2}
\begin{aligned}
\alpha^N(x) := N^\gamma\big(a_- r_{-,x} + a_+ r_{+,x}^N\big), \qquad \beta^N(x) := N^\gamma\big(b_- r_{x,-} + b_+ r_{x,+}^N\big).
\end{aligned}
\end{equation}
From condition (A2) and \eqref{eq:bd-rate-1},
\begin{equation}
\alpha^N(x) = \frac{a_-}N \sum_{y \le 0} \mathfrak p \left( \frac yN,\frac xN \right) + \frac{a_+}N \sum_{y \ge N} \mathfrak p \left( \frac yN,\frac xN \right).
\end{equation}
The growth condition \eqref{eq:growth} then yields the upper bound
\begin{equation}\label{eq:bd-rate-estimate}
\begin{aligned}
\alpha^N(x) &\lesssim a_-\int_{-\infty}^0 \left| v-\frac xN \right|^{-1-\gamma} \,\d v + a_+\int_1^\infty \left| v-\frac xN \right|^{-1-\gamma} \,\d v\\
&\lesssim \frac{a_-}\gamma \left( \frac xN \right)^{-\gamma} + \frac{a_+}\gamma \left( 1-\frac xN \right)^{-\gamma}.
\end{aligned}
\end{equation}
A similar bound holds for $\beta^N(x)$ with $a_\pm$ replaced by $b_\pm$.
When $\theta=0$, we assume further that, for each $u\in(0,1)$,
\begin{equation}\label{eq:tail-bd}
\begin{aligned}
\lim_{N\uparrow\infty} \alpha^N\big([Nu]\big) = a_-\int_{-\infty}^0 \mathfrak p(v,u) \,\d v + a_+\int_1^\infty \mathfrak p(v,u) \,\d v := \alpha(u),\\
\lim_{N\uparrow\infty} \beta^N\big([Nu]\big) = b_-\int_{-\infty}^0 \mathfrak p(u,v) \,\d v + b_+\int_1^\infty \mathfrak p(u,v) \,\d v := \beta(u).
\end{aligned}
\end{equation}
Then, $\alpha(u)$, $\beta(u) \lesssim u^{-\gamma}+(1-u)^{-\gamma}$.
Since $\gamma\in(0,1)$, by the dominated convergence theorem, $\alpha$, $\beta \in C((0,1))$ and the convergence above holds in $L^1$.

\begin{remark}
Recall the totally asymmetric $\kappa$-exclusion in Example \ref{exa:translation-invariant}, where $\mathfrak p_\mathrm{asy}(x,y)=\mathds1_{x<y}|x-y|^{-1-\gamma}$, $\gamma\in(0,1)$.
It is easy to see that the growth condition \eqref{eq:growth} is fulfilled, and
\[
\alpha_\mathrm{asy}(u)=a_-\gamma^{-1}u^{-\gamma}, \qquad \beta_\mathrm{asy}(u)=b_+\gamma^{-1}(1-u)^{-\gamma}.
\]
Similar reservoirs are discussed in \cite{BGJ21,BCGS22,S21}.
\end{remark}

\subsection{Hydrodynamic limit}

Denote by $\eta^N=\{\eta_t^N\}_{t\ge0}$ the Markov process generated by $N^\gamma\mathcal L^N$ and initial distribution $\mu_N$ on $\Omega_N$, where $\mathcal L^N:=\mathcal L_\mathrm{bulk}^N+\mathcal L_\mathrm{bd}^N$.
Let $\mathbb P_{\mu_N}$ be its law on the path space $\mathbb D(\mathbb{R}_+;\Omega_N)$ and $\mathbb E_{\mu_N}$ be the corresponding expectation.
The superscript $N$ in $\eta_t^N$ will be frequently omitted when no confusion is caused.

Similarly to \eqref{em_meas}, the empirical distribution of $\eta\in\Omega_N$ is given by
\begin{equation}\label{em_meas-bd}
\pi^N(\eta) := \dfrac{1}{N}\sum_{x\in\Lambda_N} \eta(x)\delta_{x/N}.
\end{equation}
Observe that $\pi^N(\eta)\in\mathcal M=\mathcal M([0,1])$.

Assume the existence of a measurable profile $\rho_\circ: [0,1]\to[0,\kappa]$, such that (cf.~\eqref{eq:initial}) for any $g \in C([0,1])$ and $\delta > 0$,
\begin{equation}\label{eq:initial-bd}
\lim_{N\to+\infty} \mu_N\Big( \left\{ \eta\in\Omega_N\;:\; \left\vert \big\langle \pi^N(\eta),g \big\rangle - \langle \rho_\circ, g \rangle \right\vert > \delta \right\} \Big) = 0.
\end{equation}

For $\rho \in L^\infty((0,1))$, let $\mathbb L\rho:C_c^\infty([0,1])\to\mathbb R$ be a bounded linear functional, such that for all $g \in C_c^\infty(\mathbb R)$, $(\mathbb L\rho)(g)=\langle \mathbb L\rho,g \rangle$ is given by
\begin{equation}\label{eq:operator-bd_L}
\langle \mathbb L\rho,g \rangle := \int_0^1 \int_0^1 \mathfrak{p}(u,v)\big[g(v)-g(u)\big]\rho(u)\big(\kappa-\rho(v)\big) \,\d v\,\d u.
\end{equation}
Observe that \eqref{eq:operator-bd_L} is similar to the definition in \eqref{eq:operator_L} with the only difference being at the level of the domain of integration, since the macroscopic space is now given as the interval $(0,1)$.
Let $V=V(u,\rho)$ be the source function given by
\begin{equation}
V(u,\rho) := \kappa\alpha(u)-\big[\alpha(u)+\beta(u)\big]\rho, \qquad \forall\,(u,\rho)\in(0,1)\times[0,\kappa].
\end{equation}
Recall that $\theta\le0$.
We expect the hydrodynamic equation
\begin{equation}\label{eq:hydrodynamics-bd}
\partial_t\rho(t,u) = \mathbb L\rho(t,u) + \mathds1_{\theta=0}V(u,\rho(t,u)), \qquad \rho(0,u)=\rho_\circ(u),
\end{equation}
with the boundary conditions specified in the definition below.

\begin{definition}\label{def:weak-sol-bd}
A weak solution to \eqref{eq:hydrodynamics-bd} is a measurable function $\rho :\mathbb R_+\times[0,1]\to [0,\kappa]$, such that for all test functions $G \in C_c^{1,\infty}(\mathbb R_+\times[0,1])$,
\begin{equation}\label{eq:weak-sol-bd-1}
\langle \rho_0,G_0 \rangle + \int_0^\infty \langle \rho_t, \partial_tG_t \rangle \,\d t + \int_0^\infty \big\langle \mathds1_{\theta=0}V(\cdot,\rho_t)+\mathbb L\rho_t, G_t \big\rangle \,\d t = 0
\end{equation}
and, for all $0<s<t$, it verifies the following asymptotic formulae as $\epsilon\downarrow0$:
\begin{align}
\label{eq:weak-sol-bd-2}
\begin{aligned}
\int_s^t \int_0^\epsilon \left\{ \int_\epsilon^1 \Big[\mathfrak{p}(v,u)\rho_r(v)\big(\kappa-\rho_r(u)\big) - \mathfrak{p}(u,v)\rho_r(u)\big(\kappa-\rho_r(v)\big)\Big] \,\d v \right.\qquad\qquad\quad\\
+ \left. \vphantom{\int_\epsilon^1} \mathds1_{\theta=0} V\big(u,\rho_r(u)\big) \right\} \,\d u\,\d r = O(\epsilon),
\end{aligned}
\\
\label{eq:weak-sol-bd-3}
\begin{aligned}
\int_s^t \int_{1-\epsilon}^1 \left\{ \int_0^{1-\epsilon} \Big[\mathfrak{p}(v,u)\rho_r(v)\big(\kappa-\rho_r(u)\big) - \mathfrak{p}(u,v)\rho_r(u)\big(\kappa-\rho_r(v)\big)\Big] \,\d v \right.\qquad\qquad\quad\\
+ \left. \vphantom{\int_\epsilon^1} \mathds1_{\theta=0}V\big(u,\rho_r(u)\big) \right\} \,\d u\,\d r = O(\epsilon).
\end{aligned}
\end{align}
\end{definition}

As before, denote by $\mathcal Q^N$ the push-forward measure on $\mathbb D(\mathbb{R}_+;\mathcal M)$ of $\mathbb P_{\mu_N}$ under the empirical map $\pi^N$ in \eqref{em_meas-bd}.

\begin{theorem}\label{theo:hydrodynamics-bd}
Assume conditions (A1), (A2), \eqref{eq:growth} and \eqref{eq:initial-bd}.
Assume also \eqref{eq:tail-bd} when $\theta=0$.
The sequence $\{\mathcal Q^N\}_{N\ge1}$ is tight and any weak limit point $\mathcal Q$ is concentrated on the continuous paths $C(\mathbb{R}_+;\mathcal M_\kappa)$.
Moreover, the corresponding density function
\begin{equation}
\rho \in L^\infty\big(\mathbb R_+\times(0,1)\big) \cap C\big(\mathbb R_+;L^1((0,1))\big)
\end{equation}
is $\mathcal Q$-almost surely a weak solution in the sense of Definition \ref{def:weak-sol-bd}, where $L^1((0,1))$ is endowed with the weak topology.
\end{theorem}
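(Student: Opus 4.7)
My plan is to adapt the three-step argument of the proof of Theorem \ref{theo:hydrodynamics} (tightness, martingale limit, replacement) to the bounded interval, and then to extract the boundary conditions \eqref{eq:weak-sol-bd-2}--\eqref{eq:weak-sol-bd-3} through a microscopic conservation argument in each boundary layer. Tightness of $\{\mathcal Q^N\}_{N\ge1}$ in $\mathbb D(\mathbb R_+;\mathcal M([0,1]))$, together with concentration on $C(\mathbb R_+;\mathcal M_\kappa)$, follows verbatim from Lemma \ref{lem:tight}, using $\eta(x)\le\kappa$ and Doob's inequality applied to both the bulk and boundary contributions of the quadratic variation. Note that \eqref{eq:growth} and $\gamma\in(0,1)$ automatically supply (A3) and the tail bounds \eqref{eq:p-sum}--\eqref{eq:tail-2} restricted to $\Lambda_N$, in the spirit of Remark \ref{rem:suff-cond}, so that Lemmas \ref{lem:cutting}--\ref{lem:medium-replace} apply without modification to the bulk generator; condition \eqref{eq:bound-l2} is trivial since $\eta(x)\le\kappa$.

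To establish the weak formulation \eqref{eq:weak-sol-bd-1} I would expand the martingale
\[
M_t^N(G):=\langle \pi_t^N,G_t\rangle-\langle \pi_0^N,G_0\rangle-\int_0^t\big[\langle \pi_s^N,\partial_sG_s\rangle+N^\gamma\mathcal L^N\langle \pi_s^N,G_s\rangle\big]\,\d s
\]
for $G\in C_c^{1,\infty}(\mathbb R_+\times[0,1])$. The bulk part of $N^\gamma\mathcal L^N\langle \pi^N,G\rangle$ reproduces the expression $\tfrac1{N^2}\sum_{x,y\in\Lambda_N}H_{t,x,y}^N\eta(x)(\kappa-\eta(y))$ with $H_t$ as in \eqref{eq:h}, so Lemmas \ref{lem:cutting}--\ref{lem:medium-replace} and the Lebesgue differentiation step of the proof of Theorem \ref{theo:hydrodynamics} identify its limit as $\int_0^\infty\langle \mathbb L\rho_t,G_t\rangle\,\d t$. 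The boundary contribution is
\[
N^\gamma\mathcal L_\mathrm{bd}^N\langle \pi^N(\eta),G_t\rangle=\frac{N^\theta}{N}\sum_{x\in\Lambda_N}G_t\!\left(\tfrac xN\right)\big[\alpha^N(x)(\kappa-\eta(x))-\beta^N(x)\eta(x)\big];
\]
for $\theta<0$ the prefactor $N^\theta$ kills it, since $\alpha^N,\beta^N$ are uniformly $L^1$-bounded by \eqref{eq:bd-rate-estimate} and $\gamma<1$, while for $\theta=0$ the $L^1((0,1))$-convergence in \eqref{eq:tail-bd} together with the boundedness of $\eta$ identifies the limit as $\int_0^\infty\langle V(\cdot,\rho_t),G_t\rangle\,\d t$.

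The boundary conditions \eqref{eq:weak-sol-bd-2}--\eqref{eq:weak-sol-bd-3} I would obtain by applying Dynkin's formula to the non-smooth observable $f_\epsilon(\eta):=\tfrac1N\sum_{1\le x\le\epsilon N}\eta(x)$. One has $|f_\epsilon(\eta_t)-f_\epsilon(\eta_s)|\le\kappa\epsilon$ pathwise, while a direct computation of the quadratic variation gives a bound of order $N^{\gamma-1}+\epsilon^{1-\gamma}N^{\theta-1}$ in $L^2$, so the associated martingale vanishes as $N\uparrow\infty$. The limit of $N^\gamma\mathcal L_\mathrm{bulk}^Nf_\epsilon$ can be identified via the cutting and replacement machinery as
\[
\int_0^\epsilon\!\d u\int_0^1\!\d v\,\big[\mathfrak p(v,u)\rho(v)(\kappa-\rho(u))-\mathfrak p(u,v)\rho(u)(\kappa-\rho(v))\big],
\]
and a swap $u\leftrightarrow v$ shows that the integrand over $[0,\epsilon]^2$ integrates to zero, leaving exactly the bulk integrand of \eqref{eq:weak-sol-bd-2} on $[0,\epsilon]\times[\epsilon,1]$. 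The boundary part of $N^\gamma\mathcal L^Nf_\epsilon$ contributes $\mathds1_{\theta=0}\int_0^\epsilon V(u,\rho(u))\,\d u$ by the same argument as above. Rearranging and using the pathwise $O(\epsilon)$ bound on the left-hand side yields \eqref{eq:weak-sol-bd-2}, and the symmetric choice $f_\epsilon^+(\eta):=\tfrac1N\sum_{(1-\epsilon)N\le x\le N-1}\eta(x)$ yields \eqref{eq:weak-sol-bd-3}.

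The hard part is the singular behaviour of the reservoir rates at the endpoints: $\alpha^N(x),\beta^N(x)$ diverge like $(x/N)^{-\gamma}\vee((N-x)/N)^{-\gamma}$, which are only just integrable thanks to $\gamma<1$. Every convergence involving these rates must therefore proceed by dominated convergence with the explicit majorant in \eqref{eq:bd-rate-estimate}, together with the $L^1$-convergence in \eqref{eq:tail-bd}; uniform control is unavailable. A second subtlety is that $G\in C_c^{1,\infty}(\mathbb R_+\times[0,1])$ need not vanish at $\{0,1\}$, so the weak formulation cannot be simplified by integration by parts at the endpoints; this is precisely why \eqref{eq:weak-sol-bd-2}--\eqref{eq:weak-sol-bd-3} must be stated separately, and why the conservation argument at the boundary layers via $f_\epsilon$ and $f_\epsilon^+$ is needed on top of the bulk martingale analysis.
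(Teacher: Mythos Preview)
Your proposal is correct and mirrors the paper's proof: tightness and the weak formulation \eqref{eq:weak-sol-bd-1} via the martingale decomposition plus the bounded-domain analogues of Lemmas \ref{lem:cutting}--\ref{lem:medium-replace} (together with a linear replacement for the reservoir term, the paper's Lemma \ref{lem:source}), and the boundary conditions \eqref{eq:weak-sol-bd-2}--\eqref{eq:weak-sol-bd-3} via Dynkin's formula applied to the cumulative boundary mass $F_{\epsilon'}(\eta)=N^{-1}\sum_{x\le\epsilon' N}\eta(x)$, using $|F_{\epsilon'}|\le\kappa\epsilon'$ and a vanishing quadratic variation exactly as you outline. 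The one technical point to flag is that the replacement step for $N^\gamma\mathcal L_\mathrm{bulk}^N F_{\epsilon'}$ is not a direct instance of Lemma \ref{lem:medium-replace}, since the indicator test function does not mollify the diagonal singularity of $\mathfrak p$ near $\{x\approx\epsilon' N\approx y\}$; the paper isolates this as a separate Lemma \ref{lem:boundary} with an additional inner cutoff $\epsilon''\downarrow0$, which is precisely the refinement your phrase ``cutting and replacement machinery'' has to encompass here.
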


\paragraph{Discussion on \eqref{eq:weak-sol-bd-2} and \eqref{eq:weak-sol-bd-3}.}
We heuristically interpret these limits as weak Dirichlet boundary conditions.
Indeed, the integrand of the left-hand side of \eqref{eq:weak-sol-bd-2} is equal to
\begin{align*}
\kappa\mathfrak s(u,v)\big[\rho_r(v)-\rho_r(u)\big] &+ \mathfrak a(u,v)\big(2\rho_r(u)\rho_r(v)-\kappa\rho_r(u)-\kappa\rho_r(v)\big)\\
&+ \mathds1_{\theta=0}\big(\kappa \alpha(u)-\alpha(u)\rho_r(u)-\beta(u)\rho_r(u)\big).
\end{align*}
Let us suppose that $u \mapsto \int_s^t \rho_r(u) \,\d r$ is $\gamma_*$-H\"older continuous with some $\gamma_*>\gamma$, and $(u,v) \mapsto \int_s^t \rho_r(u)\rho_r(v) \,\d r$ is continuous.
From \eqref{eq:growth},
\[
\left| \int_s^t \int_0^\epsilon \int_\epsilon^1 \mathfrak s(u,v)\big(\rho_r(v)-\rho_r(u)\big) \,\d v\,\d u\,\d r \right| \lesssim \int_0^\epsilon \int_\epsilon^1 \frac1{(v-u)^{1+\gamma-\gamma_*}} \,\d v\,\d u = O(\epsilon).
\]
Meanwhile, from the above continuity assumption, as $\epsilon\downarrow0$,
\begin{align*}
&\int_s^t \int_0^\epsilon \int_\epsilon^1 \mathfrak a(u,v)\big(2\rho_r(u)\rho_r(v)-\kappa\rho_r(u)-\kappa\rho_r(v)\big) \,\d v\,\d u\,\d r\\
=&\,\left[ o(1) + 2\int_s^t \big(\rho_r^2(0) - \kappa\rho_r(0)\big) \,\d r \right] \int_0^\epsilon \int_\epsilon^1 \mathfrak a(u,v) \,\d v\,\d u
\end{align*}
and
\begin{align*}
&\int_s^t \int_0^\epsilon \big(\kappa\alpha(u)-\alpha(u)\rho_r(u)-\beta(u)\rho_r(u)\big) \,\d v\,\d u\,\d r\\
=&\;(t-s)\kappa\int_0^\epsilon \alpha(u) \,\d u - \left[ o(1) + \int_s^t \rho_r(0) \,\d r \right] \int_0^\epsilon \big(\alpha(u)+\beta(u)\big) \,\d u.
\end{align*}
Since $\mathfrak a(u,v) \gtrsim |u-v|^{-1-\gamma}$ and $\gamma>0$,
\[
\lim_{\epsilon\downarrow0} \frac1\epsilon \int_0^\epsilon \int_\epsilon^1 \mathfrak a(u,v) \,\d v\,\d u \gtrsim \lim_{\epsilon\downarrow0} \frac1\epsilon \int_0^\epsilon \int_\epsilon^1 \frac1{(v-u)^{1+\gamma}} \,\d v\,\d u = +\infty.
\]
When $\theta=0$, suppose furthermore that the following limits exist:
\begin{align*}
\mathfrak C_1 := \lim_{\epsilon\downarrow0} \frac{\int_0^\epsilon \alpha(u) \,\d u}{\int_0^\epsilon \int_\epsilon^1 \mathfrak a(u,v) \,\d v\,\d u} = a_- \cdot \lim_{\epsilon\downarrow0} \frac{\int_0^\epsilon \int_{-\infty}^0 \mathfrak p(v,u) \,\d v\,\d u}{\int_0^\epsilon \int_\epsilon^1 \mathfrak a(u,v) \,\d v\,\d u},\\
\mathfrak C_2 := \lim_{\epsilon\downarrow0} \frac{\int_0^\epsilon \beta(u) \,\d u}{\int_0^\epsilon \int_\epsilon^1 \mathfrak a(u,v) \,\d v\,\d u} = b_- \cdot \lim_{\epsilon\downarrow0} \frac{\int_0^\epsilon \int_{-\infty}^0 \mathfrak p(u,v) \,\d v\,\d u}{\int_0^\epsilon \int_\epsilon^1 \mathfrak a(u,v) \,\d v\,\d u}.
\end{align*}
By \eqref{eq:growth}, these limits are finite.
Then, \eqref{eq:weak-sol-bd-2} implies that
\[
2\int_s^t \big(\rho_r^2(0)-\kappa\rho_r(0)\big) \,\d r + \int_s^t \big(\kappa\mathfrak C_1-(\mathfrak C_1+\mathfrak C_2) \rho_r(0)\big) \,\d r = 0.
\]
As the above holds for all $s<t$, we have for almost every $r>0$ that
\[
\kappa\mathfrak C_1-(\mathfrak C_1+\mathfrak C_2)\rho_r(0)=2\rho_r(0)\big(\kappa-\rho_r(0)\big).
\]
Similarly, suppose that the following limits exist:
\begin{align*}
\mathfrak C_3 := \lim_{\epsilon\downarrow0} \frac{\int_{1-\epsilon}^1 \alpha(u) \,\d u}{\int_{1-\epsilon}^1 \int_\epsilon^1 \mathfrak a(u,v) \,\d v\,\d u} = a_+ \cdot \lim \frac{\int_{1-\epsilon}^1 \int_1^\infty \mathfrak p(v,u) \,\d v\,\d u}{\int_{1-\epsilon}^1 \int_0^{1-\epsilon} \mathfrak a(u,v) \,\d v\,\d u},\\
\mathfrak C_4 := \lim_{\epsilon\downarrow0} \frac{\int_{1-\epsilon}^1 \beta(u) \,\d u}{\int_{1-\epsilon}^1 \int_\epsilon^1 \mathfrak a(u,v) \,\d v\,\d u} = b_+ \cdot \lim \frac{\int_{1-\epsilon}^1 \int_1^\infty \mathfrak p(u,v) \,\d v\,\d u}{\int_{1-\epsilon}^1 \int_0^{1-\epsilon} \mathfrak a(u,v) \,\d v\,\d u}.
\end{align*}
Then, \eqref{eq:weak-sol-bd-3} implies, for almost every $r>0$, that
\[
\kappa\mathfrak C_3-(\mathfrak C_3+\mathfrak C_4)\rho_r(1)=2\rho_r(1)\big(\kappa-\rho_r(1)\big).
\]
Thanks to \eqref{eq:growth}, the continuous function $\mathfrak a$ does not change its sign in the region $\{u<v\}$.
Hence, we fix w.l.o.g. that $\mathfrak a(u,v)>0$ for $u<v$, i.e. the drift is towards the right.
Then, $\mathfrak C_1$, $\mathfrak C_2\ge0$ and $\mathfrak C_3$, $\mathfrak C_4\le0$.
If $\mathfrak C_j\not=0$, $j=1$, ..., $4$, the roots which lie in $[0,\kappa]$ are
\begin{align*}
\rho_r(0) = \frac14 \left( 2\kappa+\mathfrak C_1+\mathfrak C_2 - \sqrt{(2\kappa+\mathfrak C_1+\mathfrak C_2)^2-8\kappa\mathfrak C_1} \right),\\
\rho_r(1) = \frac14 \left( 2\kappa+\mathfrak C_3+\mathfrak C_4 + \sqrt{(2\kappa+\mathfrak C_3+\mathfrak C_4)^2-8\kappa\mathfrak C_3}\right).
\end{align*}
If some $\mathfrak C_j=0$, these formulae still hold through continuous limits.
When $\theta<0$, it can be viewed as the case in which $\mathfrak C_j$ decay as $O(N^{-\theta})$ to $0$, so
\[
\rho_r(0)=0, \qquad \rho_r(1)=\kappa.
\]
In particular, for the totally asymmetric $\kappa$-exclusion corresponding to $\mathfrak p_\mathrm{asy}(x,y) = \mathds1_{x<y}|x-y|^{-1-\gamma}$ with $\gamma\in(0,1)$, we have \emph{formal} Dirichlet boundary conditions
\[
\rho_r(0) = \min\{a_-,\kappa\}, \qquad \rho_r(1) = \max\{\kappa-b_+,0\}.
\]
See also \cite{X24,X22} for similar boundaries in dynamics with nearest-neibghbor jumps.

\subsection{Proof of Theorem \ref{theo:hydrodynamics-bd}}

The proof is analogous to the one in previous section:
we first show tightness of $\mathcal Q^N$ and that any limit point is concentrated on the desired path space.
Then, the limit point is characterized by a series of lemmas.
The proofs of all the lemmas in this section are postponed to Section \ref{sec:proof_lemmas-bd}.
We begin with the following tightness result that holds for both $\theta<0$ and $\theta=0$.

\begin{lemma}[Tightness]\label{lem:tight-bd}
The sequence $\{\mathcal{Q}^N\}_{N\geq 1}$ is tight in $\mathbb{D}(\mathbb R_+;\mathcal{M})$ and any weak limit point $\mathcal{Q}$ is concentrated on $C(\mathbb{R}_+;\mathcal M_\kappa)$.
\end{lemma}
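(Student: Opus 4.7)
The plan is to follow the proof of Lemma \ref{lem:tight} closely, adapting it to the compact spatial domain $[0,1]$ and accounting for the additional boundary generator $\mathcal L_{\mathrm{bd}}^N$. By the standard Aldous-Kipnis-Landim criterion (cf.~\cite[Chapter 4]{KL99}), tightness follows once I verify, for every $g \in C^\infty([0,1])$ and $\delta>0$, the uniform boundedness $\sup_t|\langle\pi_t^N,g\rangle|\le\kappa\|g\|_\infty$, which is immediate from $\eta_t(x)\in\{0,\ldots,\kappa\}$ and the finiteness of $\Lambda_N/N\subset[0,1]$, together with the modulus-of-continuity bound $\limsup_{\epsilon\downarrow0}\limsup_N \mathcal Q^N(\sup_{|t-s|<\epsilon}|\langle\pi_t^N,g\rangle-\langle\pi_s^N,g\rangle|>\delta)=0$.

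For the latter, I would apply Dynkin's formula and split $\langle\pi_t^N,g\rangle-\langle\pi_s^N,g\rangle$ into the drift $\int_s^t N^\gamma\mathcal L^N\langle\pi_r^N,g\rangle\,dr$ and a martingale increment. The bulk drift is handled as in Lemma \ref{lem:tight}: using $-(1+\gamma)$-homogeneity of $\mathfrak p$ together with the smoothness of $g$ and the growth condition \eqref{eq:growth},
\[
\left|N^\gamma\mathcal L_{\mathrm{bulk}}^N\langle\pi^N,g\rangle\right|\lesssim_g\kappa^2\int_0^1\int_0^1 \mathfrak s(u,v)\min\{|u-v|,1\}\,du\,dv,
\]
which is finite since $\gamma\in(0,1)$. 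The boundary drift is bounded via \eqref{eq:bd-rate-estimate} by $N^\theta\kappa\|g\|_\infty\int_0^1(u^{-\gamma}+(1-u)^{-\gamma})\,du$, also finite for $\theta\le0$ and $\gamma<1$. Together, the drift satisfies a deterministic bound of order $|t-s|$.

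For the martingale term, Doob's inequality reduces the problem to estimating the predictable quadratic variation. Its bulk contribution equals
\[
\frac{1}{N^3}\int_0^T\sum_{x,y\in\Lambda_N}\mathfrak p\!\left(\tfrac xN,\tfrac yN\right)\eta_r(x)\big(\kappa-\eta_r(y)\big)\big[g(\tfrac yN)-g(\tfrac xN)\big]^2\,dr,
\]
which is $O(1/N)$ since $(u,v)\mapsto(|u-v|\wedge1)^2/|u-v|^{1+\gamma}$ is integrable on $[0,1]^2$ for $\gamma<1$. The boundary contribution to the quadratic variation is bounded analogously by a constant times $N^{\theta-1}\le N^{-1}$, using \eqref{eq:bd-rate-estimate} and the integrability of $u^{-\gamma}+(1-u)^{-\gamma}$ on $[0,1]$. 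Hence $\mathbb E[\sup_{t\le T}|M_t^N(g)|^2]\to 0$, which combined with the drift estimate yields the required equicontinuity.

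Finally, since the jumps of $\langle\pi_t^N,g\rangle$ are deterministically bounded by $\|g\|_\infty/N$, any weak limit point $\mathcal Q$ is concentrated on $C(\mathbb R_+;\mathcal M)$. For concentration on $\mathcal M_\kappa$, I use that for every non-negative $g\in C([0,1])$, $\langle\pi_t^N,g\rangle\le(\kappa/N)\sum_{x\in\Lambda_N}g(x/N)\to\kappa\int_0^1 g(u)\,du$; passing to the limit under $\mathcal Q$ gives $\langle\pi_t,g\rangle\le\kappa\int_0^1 g\,du$ uniformly in $t$, so that $\pi_t(du)=\rho_t(u)\,du$ with $0\le\rho_t\le\kappa$ almost everywhere. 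The main technical subtlety is the $\theta=0$ regime, where the boundary weights $\alpha^N(x)$ and $\beta^N(x)$ diverge near the endpoints like $u^{-\gamma}$ and $(1-u)^{-\gamma}$; the assumption $\gamma\in(0,1)$ is precisely what keeps these singularities integrable and makes both the drift and the quadratic variation controlled.
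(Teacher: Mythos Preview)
Your proposal is correct and follows the same standard route the paper indicates: the paper simply says that Lemma~\ref{lem:tight-bd} ``is proved with the same argument as used for the infinite dynamics, see Lemma~\ref{lem:tight}\dots\ The details are omitted,'' and Lemma~\ref{lem:tight} itself only cites the Aldous--Kipnis--Landim criterion and refers to \cite[Proposition~4.7]{SS18} for the equicontinuity. You have in fact supplied more detail than the paper, in particular the explicit treatment of the boundary generator contributions to the drift and to the quadratic variation via \eqref{eq:bd-rate-estimate} and the integrability of $u^{-\gamma}+(1-u)^{-\gamma}$ on $[0,1]$, which is exactly the point where $\gamma<1$ enters.
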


Hereafter, assume Lemma \ref{lem:tight-bd} to hold and take a sub-sequential limit point $\mathcal Q$ of $\{\mathcal Q^N\}_{N\ge1}$.
Assume for simplicity that the whole sequence converges weakly to $\mathcal Q$.
As before, we identify $\mathcal Q$ with the distribution of the density $\rho$ verifying $\pi_t=\rho(t,u)\d u$.
Recall that $\mathcal L^N=\mathcal L_\mathrm{bulk}^N+\mathcal L_\mathrm{bd}^N$.
For $G \in C_c^{1,\infty}(\mathbb R_+\times[0,1])$ and $\eta\in\Omega_N$,
\begin{equation}
\begin{aligned}
N^\gamma\mathcal L^N \langle \pi^N(\eta), G_t \rangle =\;&\frac1{N^2}\sum_{x,y\in\Lambda_N} H_{t,x,y}^N \eta(x)\big(\kappa -\eta(y)\big)\\
&+ \frac{N^\theta}N \sum_{x\in\Lambda_N} G_t \left( \frac xN \right)  \big[\kappa\alpha^N(x)-(\alpha^N(x)+\beta^N(x))\eta(x)\big].
\end{aligned}
\end{equation}
where $H_{t,x,y}^N=H_t(\tfrac xN,\tfrac yN)$ is given in \eqref{eq:h} and $\alpha^N$, $\beta^N$ are given in \eqref{eq:bd-rate-2}.

\begin{lemma}\label{lem:martingale-bd}
For any $\delta>0$, $\lim_{N\uparrow\infty} \mathbb P_{\mu_N} (\sup_{t\in[0,T]} |M_t^N(G)| > \delta) = 0$, where
\begin{equation}
M_t^N(G) := \langle \pi_t^N,G_t \rangle - \langle \pi_0^N,G_0 \rangle - \int_0^t \big[\langle \pi_s^N,\partial_sG_s \rangle + N^\gamma\mathcal L^N \langle \pi_s^N, G_s \rangle\big]\,\d s.
\end{equation}
\end{lemma}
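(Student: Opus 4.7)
The plan is to mirror, step by step, the argument used for Lemma \ref{lem:martingale} in the bulk-only setting. By Dynkin's formula, $M_t^N(G)$ is a mean-zero martingale with respect to the natural filtration of the process, and it suffices by Doob's $L^2$ inequality followed by Chebyshev to show that
\[
\mathbb E_{\mu_N}\big[\langle M^N(G)\rangle_T\big] \xrightarrow[N\uparrow\infty]{} 0.
\]
Since $\mathcal L^N = \mathcal L_\mathrm{bulk}^N + \mathcal L_\mathrm{bd}^N$ and the two pieces act on disjoint transitions, the carré du champ splits cleanly into a bulk contribution and a boundary contribution, and the two are bounded independently. The fact that $G$ is compactly supported and smooth, together with the finite configuration space $\{0,\ldots,\kappa\}^{\Lambda_N}$, makes all objects well defined.

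The bulk piece of $\langle M^N(G)\rangle_t$ is
\[
\frac{N^\gamma}{N^2}\int_0^t \sum_{x,y\in\Lambda_N} \mathfrak p(x,y)\,\eta_s(x)(\kappa-\eta_s(y))\Big[G_s(y/N)-G_s(x/N)\Big]^2\,\d s.
\]
Rewriting $N^\gamma \mathfrak p(x,y)=N^{-1}\mathfrak p(x/N,y/N)$ via the homogeneity assumption (A2), using the Lipschitz estimate
\[
\big|G_s(y/N)-G_s(x/N)\big|^2 \lesssim_G \min\{|y-x|/N,1\},
\]
and bounding $\eta_s(x)(\kappa-\eta_s(y))\le\kappa^2$, the bulk contribution is controlled by $\tfrac{C_G}N$ times the Riemann sum approximating $\iint_{[0,1]^2}\mathfrak s(u,v)\min\{|u-v|,1\}\,\d v\,\d u$, which is finite thanks to the growth assumption \eqref{eq:growth} with $\gamma\in(0,1)$. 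Hence this contribution is of order $N^{-1}$ uniformly in $t\in[0,T]$.

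The boundary piece of $\langle M^N(G)\rangle_t$ is
\[
\frac{N^\gamma}{N^2}\int_0^t N^\theta \sum_{x\in\Lambda_N} G_s(x/N)^2\Big[\big(a_-r_{-,x}+a_+r_{+,x}^N\big)(\kappa-\eta_s(x)) + \big(b_-r_{x,-}+b_+r_{x,+}^N\big)\eta_s(x)\Big]\,\d s.
\]
Inserting the definitions \eqref{eq:bd-rate-2} of $\alpha^N$, $\beta^N$ and using the pointwise upper bound \eqref{eq:bd-rate-estimate} (together with the analogous bound for $\beta^N$), this is at most
\[
\frac{\kappa\|G\|_\infty^2\,N^\theta}{N^2}\int_0^t \sum_{x\in\Lambda_N} \big[\alpha^N(x)+\beta^N(x)\big]\,\d s \;\lesssim\; T\,\|G\|_\infty^2\,N^{\theta-1}\cdot\frac1N\sum_{x\in\Lambda_N}\left[\Big(\tfrac xN\Big)^{-\gamma}+\Big(1-\tfrac xN\Big)^{-\gamma}\right].
\]
Since $\gamma\in(0,1)$, the last Riemann sum converges to the finite integral $2\int_0^1 u^{-\gamma}\,\d u$, so the boundary piece is $O(N^{\theta-1})$ and vanishes because $\theta\le0$.

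Combining the two estimates gives $\mathbb E_{\mu_N}[\langle M^N(G)\rangle_T]=O(N^{-1}+N^{\theta-1})\to 0$, and the lemma follows. I do not anticipate a real obstacle: the only delicate point is the potential blow-up of the boundary rates $\alpha^N(x),\beta^N(x)$ near $x\in\{1,N-1\}$, but the singularity $(x/N)^{-\gamma}$ is integrable because $\gamma<1$, so the Riemann sum stays bounded and the factor $N^{\theta-1}$ with $\theta\le0$ dominates.
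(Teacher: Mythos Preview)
Your proposal is correct and follows the same approach the paper indicates: compute the quadratic variation of the Dynkin martingale and apply Doob's inequality, exactly as in Lemma~\ref{lem:martingale}. The paper itself omits all details for Lemma~\ref{lem:martingale-bd}, merely referring back to the infinite-lattice case; your explicit treatment of the boundary contribution via the bound~\eqref{eq:bd-rate-estimate} and the integrability of $u^{-\gamma}$ on $(0,1)$ fills in precisely what the paper leaves implicit.
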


Similarly to the previous section, the remaining lemmas are stated for a sequence of probability measures $\mathbb P_N$ on $\mathbb D(\mathbb R_+;\mathbb N_0^{\Lambda_N})$, such that (cf.~\eqref{eq:bound-l2})
\begin{equation}\label{eq:bound-l2-bd}
\limsup_{N\uparrow\infty} \sup_{x\in\Lambda_N} \left\{ \mathbb E_N \left[ \int_0^T \big|\eta_t(x)\big|^2\,\d t \right] \right\} < \infty, \qquad \forall\,T>0.
\end{equation}

\begin{lemma}\label{lem:cutting-bd}
For $\epsilon'>0$, let
\begin{equation}
B_{\epsilon'}:=\big\{ (u,v)\in [0,1]^2 \;:\; \epsilon'<u,v<1-\epsilon', |u-v|>\epsilon' \big\}.
\end{equation}
Assume \eqref{eq:bound-l2-bd}.
Then,
\begin{equation}\label{eq:cutting-bd}
\limsup_{\epsilon'\downarrow0,N\uparrow\infty} \mathbb E_N \left[ \frac1{N^2}\int_0^T \sum_{(x,y)\in\Lambda_N^2 \setminus NB_{\epsilon'}} H_{t,x,y}^N \eta_t(x)\big(\kappa-\eta_t(y)\big)\,\d t \right] = 0.
\end{equation}
\end{lemma}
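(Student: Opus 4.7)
The plan is to mimic the proof of Lemma \ref{lem:cutting}, with two simplifications: the domain $\Lambda_N^2$ is bounded, and the two-sided growth condition \eqref{eq:growth} gives us a clean pointwise upper bound $\mathfrak p(u,v) \lesssim |u-v|^{-1-\gamma}$ that makes the tail conditions \eqref{eq:tail-1}--\eqref{eq:tail-2} redundant. The price to pay is the absence of a uniform bound on $\eta_t(x)$, which forces us to invest the $L^2$-in-time hypothesis \eqref{eq:bound-l2-bd} through Cauchy--Schwarz.

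First, I would reduce the statement to a deterministic sum. Using $|\eta_t(x)(\kappa-\eta_t(y))| \le \kappa\eta_t(x)$, Cauchy--Schwarz in time, and \eqref{eq:bound-l2-bd}, we obtain $\mathbb E_N\int_0^T \eta_t(x)\,\d t \le C_T$ uniformly in $x\in\Lambda_N$ and $N\ge1$. Fubini's theorem then reduces the lemma to
\[
\limsup_{\epsilon'\downarrow 0,\, N\uparrow\infty} \frac1{N^2} \sum_{(x,y) \in \Lambda_N^2 \setminus NB_{\epsilon'}} \sup_{t\in[0,T]} \big|H_{t,x,y}^N\big| = 0.
\]

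Next, I would prove the pointwise bound
\[
\sup_{t\in[0,T]} \big|H_{t,x,y}^N\big| \lesssim N^\gamma |x-y|^{-\gamma}, \qquad x,y\in\Lambda_N, \; x\not=y.
\]
Indeed, since $G$ is compactly supported and smooth, $|H_{t,x,y}^N| \lesssim \mathfrak p(x/N,y/N)\min\{|y-x|/N,1\}$ as in the proof of Lemma \ref{lem:cutting}. The homogeneity (A2) gives $\mathfrak p(x/N,y/N) = N^{1+\gamma}\mathfrak p(x,y)$, and \eqref{eq:growth} bounds the latter by $|x-y|^{-1-\gamma}$ up to a constant. Because $|x-y|\le N$ inside $\Lambda_N^2$, the minimum equals $|y-x|/N$, giving the claim.

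Finally, I would split $\Lambda_N^2\setminus NB_{\epsilon'}$ into the diagonal part $D=\{|x-y|\le\epsilon'N\}$ and the boundary part (the complementary region where some coordinate lies within $\epsilon'N$ of $0$ or $N$). For the diagonal part,
\[
\frac{N^\gamma}{N^2}\sum_{x\in\Lambda_N}\sum_{0<|x-y|\le\epsilon'N} |x-y|^{-\gamma} \lesssim \frac{N^\gamma}{N^2}\cdot N\cdot(\epsilon'N)^{1-\gamma} = (\epsilon')^{1-\gamma}.
\]
For the boundary part, say the piece with $x\le\epsilon'N$ (the three symmetric cases are analogous),
\[
\frac{N^\gamma}{N^2}\sum_{x\le\epsilon'N}\sum_{y\in\Lambda_N,\,y\not=x} |x-y|^{-\gamma} \lesssim \frac{N^\gamma}{N^2}\cdot\epsilon'N\cdot N^{1-\gamma} = \epsilon'.
\]
Since $\gamma\in(0,1)$, both estimates vanish as $\epsilon'\downarrow0$, which concludes the proof. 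The only genuinely delicate point is the reduction step: in contrast with Lemma \ref{lem:cutting}, where $\eta$ is bounded by $\kappa$, here we only have the weaker hypothesis \eqref{eq:bound-l2-bd} on $\mathbb P_N$, so I would be careful to integrate in time before applying the uniform moment bound.
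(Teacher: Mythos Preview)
Your reduction step has a gap: the inequality $|\eta_t(x)(\kappa-\eta_t(y))| \le \kappa\eta_t(x)$ requires $0\le\eta_t(y)\le\kappa$, but the lemma is stated for general $\mathbb P_N$ on $\mathbb D(\mathbb R_+;\mathbb N_0^{\Lambda_N})$ satisfying only \eqref{eq:bound-l2-bd}, so $\eta_t(y)$ may exceed $\kappa$ and $|\kappa-\eta_t(y)|$ is not bounded. You even flag this issue yourself in the last paragraph, so the slip is easy to repair: use instead $|\eta_t(x)(\kappa-\eta_t(y))| \lesssim 1+\eta_t(x)^2+\eta_t(y)^2$, which \eqref{eq:bound-l2-bd} controls directly without any Cauchy--Schwarz detour. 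This is exactly the bound the paper invokes.

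After this fix, your argument is essentially the paper's: both reduce to the deterministic sum $N^{-2}\sum_{(x,y)\in\Lambda_N^2\setminus NB_{\epsilon'}}\mathfrak p(x/N,y/N)\,|x-y|/N$, split off the diagonal strip $\{|x-y|\le\epsilon'N\}$ from the boundary region, and control each piece via $\mathfrak p(u,v)\lesssim|u-v|^{-1-\gamma}$ from \eqref{eq:growth} together with $\gamma<1$. The only cosmetic difference is that the paper passes to Riemann integrals before sending $\epsilon'\downarrow0$, whereas you estimate the discrete sums directly; your route is arguably a bit more transparent.
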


\begin{lemma}\label{lem:medium-replace-bd}
Assume \eqref{eq:bound-l2-bd}.
For any fixed $\epsilon'>0$,
\begin{equation}\label{eq:medium-replace-bd}
\begin{aligned}
\limsup_{\epsilon\downarrow0,N\uparrow\infty} \mathbb E_N &\left[ \frac1{N^2}\int_0^T \left| \sum_{(x,y) \in NB_{\epsilon'}} H_{t,x,y}^N \right.\right.\\
&\quad\left.\left. \vphantom{\sum_{(x,y) \in NB_{\epsilon'}}} \Big(\eta_t(x)\big(\kappa - \eta_t(y)\big) - \eta_t(x)^{\epsilon N}\big(\kappa - \eta_t(y)^{\epsilon N}\big)\Big) \right| \,\d t \right] = 0.
\end{aligned}
\end{equation}
\end{lemma}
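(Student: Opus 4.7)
The plan is to mimic the proof of Lemma \ref{lem:medium-replace} essentially verbatim, with the region $NA_{R,\epsilon'}$ replaced by $NB_{\epsilon'}$. The key point to check is that the restriction $\epsilon'<u,v<1-\epsilon'$ in the definition of $B_{\epsilon'}$ plays the role of the compact-support cutoff $|u|<R$ in the bulk case: for $\epsilon<\epsilon'/2$, all translates $x+z$, $y+z'$ with $z$, $z'\in\Gamma_{\epsilon N}$ and $(x,y)\in NB_{\epsilon'}$ still lie in $\Lambda_N$, so the occupation variables are well-defined and no boundary generator terms enter the computation.

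First I would exploit the affineness of $(\mathsf s,\mathsf s')\mapsto \mathsf s(\kappa-\mathsf s')$ in each argument to write, exactly as in \eqref{eq:using_affine},
\[
\eta_t(x)\big(\kappa-\eta_t(y)\big)-\eta_t(x)^{\epsilon N}\big(\kappa-\eta_t(y)^{\epsilon N}\big)=\frac1{|\Gamma_{\epsilon N}|^2}\sum_{z,z'\in\Gamma_{\epsilon N}}\Big[\eta_t(x)\big(\kappa-\eta_t(y)\big)-\eta_t(x+z)\big(\kappa-\eta_t(y+z')\big)\Big].
\]
Inserting this into the sum over $(x,y)\in NB_{\epsilon'}$ and performing the change of variables $(x,y)\mapsto(x-z,y-z')$ in the summand involving the translated occupation variables, I would split the resulting expression into two pieces. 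The first one is the bulk piece, where the sum is over $(x,y)\in NB_{\epsilon'}$ of the difference $H_{t,x,y}^N-H_{t,x-z,y-z'}^N$ multiplied by $\eta_t(x-z)(\kappa-\eta_t(y-z'))$. The second one is the boundary piece, indexed by the symmetric difference $N\Delta_{\epsilon'}(z,z')$ between $NB_{\epsilon'}$ and $NB_{\epsilon'}+(z,z')$, with kernel $H_{t,x-z,y-z'}^N$ and the corresponding occupation product.

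For the bulk piece, I would use that $B_{\epsilon'}$ is a compact subset of $\mathbb R_0^2$ (since it avoids the diagonal by a margin $\epsilon'$), so by (A1) the jump kernel $\mathfrak p$ is Lipschitz on a small open enlargement of $B_{\epsilon'}$ with some constant $L_{\epsilon'}$. From the definition \eqref{eq:h} and the smoothness of $G$, this yields
\[
\big|H_{t,x,y}^N-H_{t,x-z,y-z'}^N\big|\lesssim \Big(\sup_{B_{\epsilon'}}\mathfrak p\Big)\,\epsilon\,\|\partial_u G\|_\infty+\epsilon\,L_{\epsilon'}\,\|G\|_\infty
\]
uniformly in $(x,y)\in NB_{\epsilon'}$ and $z$, $z'\in\Gamma_{\epsilon N}$. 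Combined with \eqref{eq:bound-l2-bd} and the fact that $\#(NB_{\epsilon'})\lesssim N^2$, this contribution is $O(\epsilon)$ and vanishes as $\epsilon\downarrow 0$ after sending $N\uparrow\infty$.

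For the boundary piece, the symmetric difference $N\Delta_{\epsilon'}(z,z')$ has cardinality $O(\epsilon N\cdot N)$ uniformly in $z$, $z'\in\Gamma_{\epsilon N}$, and on a slight enlargement of $B_{\epsilon'}$ (still compact in $\mathbb R_0^2$) the kernel $\mathfrak p$ is bounded, so $|H_{t,x-z,y-z'}^N|\lesssim_{\epsilon'}\|G\|_\infty$. Again invoking \eqref{eq:bound-l2-bd} gives a bound of order $\epsilon$, vanishing in the iterated limit. The main subtlety, and thus the main thing to verify carefully, is that the compactness of $B_{\epsilon'}$ in $\mathbb R_0^2$ is what gives both the Lipschitz constant $L_{\epsilon'}$ and the boundedness of $\mathfrak p$; these blow up as $\epsilon'\downarrow 0$, which is why the $\epsilon'$-cutoff is necessary and why this replacement lemma is only stated for fixed $\epsilon'>0$.
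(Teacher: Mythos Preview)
Your proposal is correct and follows essentially the same approach as the paper: the paper's proof explicitly invokes ``the same argument as in the proof of Lemma \ref{lem:medium-replace}'' with $NB_{\epsilon'}$ in place of $NA_{R,\epsilon'}$, leading to the identical decomposition into the two pieces $\mathcal I_1$ and $\mathcal I_2$ that you describe. Your observation that the compactness of $B_{\epsilon'}$ in $\mathbb R_0^2$ is what supplies both the Lipschitz constant and the uniform bound on $\mathfrak p$ is exactly the point, and your care in taking $\epsilon<\epsilon'/2$ (rather than the paper's $\epsilon<\epsilon'$) to keep the translated pairs away from the diagonal is a harmless refinement.
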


When $\theta=0$, the source term is formulated by applying the next lemma.

\begin{lemma}\label{lem:source}
Assume \eqref{eq:bound-l2-bd}.
Then,
\begin{equation}\label{eq:source}
\begin{aligned}
\limsup_{\epsilon'\downarrow0,\epsilon\downarrow0,N\uparrow\infty} \mathbb E_N &\left[ \frac1N \int_0^T \left| \sum_{x=1}^{N-1} G_t \left( \frac xN \right) \alpha^N(x)\eta_t(x) \right.\right.\\
&\qquad\left.\left. - \sum_{x=\epsilon'N}^{(1-\epsilon')N} G_t \left( \frac xN \right) \alpha \left( \frac xN \right) \eta_t(x)^{\epsilon N} \right| \,\d t \right] = 0.
\end{aligned}
\end{equation}
The analogous result holds when $(\alpha^N,\alpha)$ is replaced by $(\beta^N,\beta)$.
\end{lemma}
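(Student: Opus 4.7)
The plan is to decompose the expression inside the absolute value of \eqref{eq:source} as $\mathcal E_\mathrm{bd} + \mathcal E_\mathrm{coef} + \mathcal E_\mathrm{avg}$, where $\mathcal E_\mathrm{bd}$ gathers the contribution of the first sum for $x \notin [\epsilon'N, (1-\epsilon')N]$, $\mathcal E_\mathrm{coef}$ is the bulk error arising from replacing $\alpha^N(x)$ by $\alpha(x/N)$ while keeping $\eta_t(x)$, and $\mathcal E_\mathrm{avg}$ is the subsequent bulk error from replacing $\eta_t(x)$ by $\eta_t(x)^{\epsilon N}$. The single ingredient used repeatedly is the consequence of Cauchy--Schwarz and \eqref{eq:bound-l2-bd}, namely $\sup_{N,x} \mathbb E_N[\int_0^T |\eta_t(x)|\,\d t] < +\infty$. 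Each error is then estimated in the iterated limit $\limsup_{\epsilon' \downarrow 0}\limsup_{\epsilon \downarrow 0}\limsup_{N \uparrow \infty}$.

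For $\mathcal E_\mathrm{bd}$, combining the uniform $L^1$-in-time moment with the boundary estimate \eqref{eq:bd-rate-estimate} yields
\[
\mathbb E_N\bigl[|\mathcal E_\mathrm{bd}|\bigr] \lesssim \|G\|_\infty \cdot \frac 1 N \sum_{x/N \notin [\epsilon',1-\epsilon']} \bigl[(x/N)^{-\gamma} + (1-x/N)^{-\gamma}\bigr] \lesssim (\epsilon')^{1-\gamma},
\]
which vanishes in the outermost limit thanks to $\gamma \in (0,1)$. For $\mathcal E_\mathrm{coef}$, the same moment bound gives $\mathbb E_N[|\mathcal E_\mathrm{coef}|] \lesssim \|G\|_\infty \cdot N^{-1} \sum_{\epsilon'N \le x \le (1-\epsilon')N} |\alpha^N(x) - \alpha(x/N)|$, and this Riemann-type sum tends to $0$ as $N \uparrow \infty$ by the $L^1$-convergence $\alpha^N([N\,\cdot\,]) \to \alpha$ on $(0,1)$ recorded after \eqref{eq:tail-bd}, together with the uniform continuity of $\alpha$ on $[\epsilon',1-\epsilon']$.

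Finally, for $\mathcal E_\mathrm{avg}$, set $F(x) := G_t(x/N)\alpha(x/N)$ and assume $\epsilon < \epsilon'/2$ so that the block averages $\eta_t(x)^{\epsilon N}$ for $x \in [\epsilon'N,(1-\epsilon')N]$ stay inside $\Lambda_N$. A change of variables $y = x+z$ rewrites the block-averaged sum as $\sum_y \eta_t(y) \tilde F(y)$ for a suitable discrete convolution $\tilde F$ of $F$. The resulting error splits into a Lipschitz-type term, bounded by $\epsilon$ times the Lipschitz constant of $F$ on $[\epsilon'/2,1-\epsilon'/2]$ (finite for each fixed $\epsilon'$), and a boundary correction supported on $O(\epsilon N)$ sites near $\{\epsilon'N,(1-\epsilon')N\}$, of size $O(\epsilon)$ after dividing by $N$; both vanish as $\epsilon \downarrow 0$ for fixed $\epsilon'$. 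The analogous argument handles $(\beta^N,\beta)$. The main subtlety lies in $\mathcal E_\mathrm{bd}$: the singularity of $\alpha^N$ near the boundary must be tamed using only an $L^1$-in-time bound on $\eta_t(x)$, which is exactly what the integrability of $u^{-\gamma}+(1-u)^{-\gamma}$ on $[0,1]$ — equivalently the hypothesis $\gamma \in (0,1)$ — provides.
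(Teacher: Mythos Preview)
Your proposal is correct and matches the paper's proof step for step: boundary cutoff via \eqref{eq:bd-rate-estimate} and $\gamma<1$, coefficient replacement via the $L^1$-convergence recorded after \eqref{eq:tail-bd}, and block-averaging via regularity of $G_t\alpha$ on $[\epsilon',1-\epsilon']$. One minor point: for the last step the paper invokes only uniform continuity of $G_t\alpha$ (which suffices and is immediate from $\alpha\in C((0,1))$), rather than a Lipschitz bound, so your $O(\epsilon)$ estimate should more safely be stated as $o_\epsilon(1)$.
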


Finally, the boundary conditions follow from the next lemma.

\begin{lemma}\label{lem:boundary}
Assume \eqref{eq:bound-l2-bd}.
For any fixed $\epsilon'>0$,
\begin{align}
\label{eq:boundary-1}
\begin{aligned}
\limsup_{\epsilon''\downarrow0,\epsilon\downarrow0,N\uparrow\infty} \mathbb E_N &\left[ \frac1{N^2} \int_0^T \left| \sum_{x=1}^{\epsilon'N} \sum_{y=\epsilon'N+1}^{N-1} \mathfrak p \left( \frac xN,\frac yN \right)  \eta(x)\big(\kappa - \eta(y)\big) \right.\right.\\
&\quad\left.\left. - \sum_{x=\epsilon''N}^{\epsilon'N} \sum_{y=(\epsilon'+\epsilon'')N}^{(1-\epsilon'')N} \mathfrak p \left( \frac xN,\frac yN \right)  \eta_t(x)^{\epsilon N}\big(\kappa - \eta_t(y)^{\epsilon N}\big) \right|\,\d t \right] = 0,
\end{aligned}
\\
\label{eq:boundary-2}
\begin{aligned}
\limsup_{\epsilon''\downarrow0,\epsilon\downarrow0,N\uparrow\infty} \mathbb E_N &\left[ \frac1{N^2} \int_0^T \left| \sum_{x=1}^{\epsilon'N} \sum_{y=\epsilon'N+1}^{N-1} \mathfrak p \left(\frac yN,\frac xN \right)  \eta(y)\big(\kappa - \eta(x)\big) \right.\right.\\
&\quad\left.\left. - \sum_{x=\epsilon''N}^{\epsilon'N} \sum_{y=(\epsilon'+\epsilon'')N}^{(1-\epsilon'')N} \mathfrak p \left( \frac yN,\frac xN \right) \eta_t(y)^{\epsilon N}\big(\kappa - \eta_t(x)^{\epsilon N}\big) \right|\,\d t \right] = 0.
\end{aligned}
\end{align}
The analogous statements hold for the right boundary.
\end{lemma}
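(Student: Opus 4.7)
The plan is to decompose the replacement error in \eqref{eq:boundary-1} into a \emph{cutoff contribution} on a thin layer of the summation region and a \emph{bulk replacement contribution} on the remainder, and to treat them separately. Writing $E_t(x,y):=\eta_t(x)(\kappa-\eta_t(y))-\eta_t(x)^{\epsilon N}(\kappa-\eta_t(y)^{\epsilon N})$ and setting
\[
\mathcal R_N^{\mathrm{good}}:=\bigl\{(x,y)\in\Lambda_N^2\;:\;\epsilon''N\le x\le \epsilon'N,\ (\epsilon'+\epsilon'')N\le y\le(1-\epsilon'')N\bigr\},
\]
the difference of the two sums inside the absolute value in \eqref{eq:boundary-1} splits as
\[
\frac1{N^2}\sum_{(x,y)\in\mathcal R_N^{\mathrm{cut}}}\mathfrak p\left(\tfrac xN,\tfrac yN\right)\eta_t(x)\bigl(\kappa-\eta_t(y)\bigr) + \frac1{N^2}\sum_{(x,y)\in\mathcal R_N^{\mathrm{good}}}\mathfrak p\left(\tfrac xN,\tfrac yN\right)E_t(x,y),
\]
where $\mathcal R_N^{\mathrm{cut}}$ is the complement of $\mathcal R_N^{\mathrm{good}}$ inside $\{1,\ldots,\epsilon'N\}\times\{\epsilon'N+1,\ldots,N-1\}$.

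For the cutoff contribution, $\mathcal R_N^{\mathrm{cut}}$ decomposes into three subregions: (i) $x\in[1,\epsilon''N]$ with arbitrary $y$; (ii) $x\in[\epsilon''N,\epsilon'N]$, $y\in[\epsilon'N+1,(\epsilon'+\epsilon'')N]$; (iii) $x\in[\epsilon''N,\epsilon'N]$, $y\in[(1-\epsilon'')N,N-1]$. Using $|\eta_t(x)(\kappa-\eta_t(y))|\lesssim 1+\eta_t(x)^2+\eta_t(y)^2$ together with \eqref{eq:bound-l2-bd}, and the growth bound $\mathfrak p(u,v)\lesssim|u-v|^{-1-\gamma}$ from \eqref{eq:growth}, each piece is controlled by a Riemann-sum approximation of $\iint|u-v|^{-1-\gamma}\,\d v\,\d u$ over the analogous macroscopic rectangle. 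In (i) and (iii), $|u-v|$ is bounded below by a constant depending only on $\epsilon'$, so these integrals are trivially $O(\epsilon'')$. In (ii), the change of variable $w=\epsilon'-u$ yields
\[
\int_{\epsilon''}^{\epsilon'}\!\int_{\epsilon'}^{\epsilon'+\epsilon''}\!\frac{\d v\,\d u}{(v-u)^{1+\gamma}}=\frac{(\epsilon'-\epsilon'')^{1-\gamma}-(\epsilon')^{1-\gamma}+(\epsilon'')^{1-\gamma}}{\gamma(1-\gamma)}=O\bigl((\epsilon'')^{1-\gamma}\bigr),
\]
which vanishes as $\epsilon''\downarrow 0$ precisely because $\gamma\in(0,1)$.

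On $\mathcal R_N^{\mathrm{good}}$, the rescaled arguments $(x/N,y/N)$ are confined to a compact subset of $\mathbb R_0^2$, since $y-x\ge\epsilon''N$ forces $|u-v|\ge\epsilon''$. Hence $\mathfrak p$ is bounded and Lipschitz there by (A1). The bulk replacement term is then handled exactly as in the proof of Lemma \ref{lem:medium-replace-bd}: by bilinearity of $(\mathsf s,\mathsf s')\mapsto \mathsf s(\kappa-\mathsf s')$, rewrite $E_t(x,y)$ as $|\Gamma_{\epsilon N}|^{-2}\sum_{z,z'\in\Gamma_{\epsilon N}}[\eta_t(x)(\kappa-\eta_t(y))-\eta_t(x+z)(\kappa-\eta_t(y+z'))]$; the change of summation variable $(x,y)\mapsto(x-z,y-z')$ shows that the total contribution is bounded by a Lipschitz-difference of $\mathfrak p$ of order $\epsilon$ summed over the good region, plus a boundary term supported on the symmetric difference between $\mathcal R_N^{\mathrm{good}}$ and its shift $\mathcal R_N^{\mathrm{good}}+(z,z')$. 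The latter has cardinality $O(\epsilon N^2)$ since $\mathcal R_N^{\mathrm{good}}$ has finite perimeter. Both contributions are therefore $O(\epsilon)$ and vanish in the iterated limit $\lim_{\epsilon\downarrow0}\lim_{N\uparrow\infty}$, provided $\epsilon<\epsilon''/2$ so that the shifted averaging boxes remain inside $\Lambda_N$.

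Statement \eqref{eq:boundary-2} is proved by the same argument with $\mathfrak p(y/N,x/N)$ in place of $\mathfrak p(x/N,y/N)$, since the growth and Lipschitz bounds of $\mathfrak p$ are symmetric in its two arguments and $\eta_t(y)(\kappa-\eta_t(x))$ remains affine in each occupation variable. The right-boundary analogues follow by replacing neighbourhoods of $0$ with neighbourhoods of $1$. The delicate point throughout is region (ii), where the kernel blows up on the diagonal and the integration domain approaches it simultaneously; the restriction $\gamma<1$ in \eqref{eq:growth} is exactly what keeps that integral integrable and vanishing in $\epsilon''$.
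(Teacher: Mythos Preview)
Your proof is correct and follows essentially the same two-step strategy as the paper: a cutoff estimate on the thin layer $\mathcal R_N^{\mathrm{cut}}$ (the paper's $\mathcal J_1$) and a Lipschitz/symmetric-difference argument on $\mathcal R_N^{\mathrm{good}}$ (the paper's $\mathcal J_2$, $\mathcal J_3$). The only minor difference is in how the cutoff region is subdivided: the paper separates the near-diagonal strip $\{|y-x|\le\epsilon''N\}$ from the rest and invokes \eqref{eq:integrability-bd} for the latter, whereas you partition $\mathcal R_N^{\mathrm{cut}}$ into three rectangles and compute the singular one explicitly as $O((\epsilon'')^{1-\gamma})$; both routes use $\gamma<1$ in exactly the same way.
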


Now, we show the main theorem based on these lemmas.

\begin{proof}[Proof of Theorem \ref{theo:hydrodynamics-bd}]
To shorten the notation, we write
\begin{equation}
V_\theta:=\mathds1_{\theta=0}V(u,\rho) = \mathds1_{\theta=0}\big(\kappa\alpha(u)-(\alpha(u)+\beta(u))\rho\big).
\end{equation}
Using Lemma \ref{lem:tight-bd}, take $\mathcal Q$ to be a limit point of $\{\mathcal Q^N\}_{N\geq 1}$ with $\rho=\rho(t,u)$ the density of a typical path under $\mathcal Q$.
Let $G \in C^{1,\infty}_c(\mathbb R_+\times\mathbb R)$ be an arbitrary test function and fix $T>0$ sufficiently large such that $G_t\equiv0$ for all $t \ge T$.
We first prove for any $\delta>0$ that
\begin{equation}\label{eq:tobeshown-bd-1}
\begin{aligned}
\mathcal{Q} &\left( \bigg\vert \langle \rho_\circ, G_0\rangle + \int_0^T \langle \rho_t, \partial_tG_t \rangle \,\d t + \int_0^T \langle V_\theta(\cdot,\rho_t), G_t \rangle \,\d t \vphantom{\iint_{[0,1]^2}} \right.\\
&\qquad\left. + \int_0^T \d t \iint_{[0,1]^2} H_t(u,v)  \rho_t(u)\big(\kappa - \rho_t(v)\big)\,\d{v}\,\d{u}\bigg\vert > \delta \right) = 0,
\end{aligned}
\end{equation}
where $H_t(u,v)$ is defined in \eqref{eq:h}.
Using \eqref{eq:growth} and $\gamma<1$, one obtains
\begin{equation}\label{eq:integrability-bd}
\iint_{[0,1]^2} \mathfrak{p}(u,v) |u-v| \,\d v\,\d u \lesssim \iint_{[0,1]^2} |u-v|^{-\gamma} \,\d v\,\d u < +\infty.
\end{equation}
Since $|H_t(u,v)| \lesssim \mathfrak{p}(u,v)|u-v|$ and $\rho$ is bounded, \eqref{eq:integrability-bd} and the integrability of $\alpha$, $\beta$ allow us to show instead
\begin{equation}
\begin{aligned}
\limsup_{\epsilon'\downarrow0} \mathcal{Q} &\left( \bigg\vert \langle \rho_\circ, G_0\rangle + \int_0^T \langle \rho_t, \partial_tG_t \rangle \,\d t + \int_0^T \int_{\epsilon'}^{1-\epsilon'} V_\theta(u,\rho_t)G_t(u)\,\d u\,\d t \right.\\
&\qquad \left. + \int_0^T \d t \iint_{B_{\epsilon'}} H_t(u,v) \rho_t(u)\big(\kappa - \rho_t(v)\big)\,\d{v}\,\d{u}\bigg\vert > \delta \right) = 0.
\end{aligned}
\end{equation}
By the same manipulations as used to prove Theorem \ref{theo:hydrodynamics}, it suffices to show
\begin{equation}
\begin{aligned}
\limsup_{\epsilon'\downarrow 0,\epsilon\downarrow 0,N\uparrow\infty} \mathcal{Q}^N &\left( \bigg\vert \langle \pi_0^N, G_0 \rangle + \int_0^T \langle \pi_t^N, \partial_tG_t \rangle \,\d t \vphantom{\sum_{(x,y) \in B_{N,\epsilon'}}}\right.\\
&\qquad + \frac1N \int_0^T \sum_{x=\epsilon'N}^{(1-\epsilon')N} G_t \left( \frac xN \right) V_\theta \left( \frac xN,\eta_t(x)^{\epsilon N} \right) \,\d t\\
&\qquad\left.+ \frac1{N^2}\int_0^T \sum_{(x,y) \in N B_{\epsilon'}} H_{t,x,y}^N \eta_t(x)^{\epsilon N}\big(\kappa - \eta_t(y)^{\epsilon N}\big)\,\d t\bigg\vert > \delta \right) = 0.
\end{aligned}
\end{equation}
Noting that the second integral in the left-hand side of the previous display is linearly dependent on $\eta_t$, we can conclude from Lemma \ref{lem:martingale-bd} and Lemmas \ref{lem:cutting-bd}, \ref{lem:medium-replace-bd} and  \ref{lem:source} when applied to $\mathbb P_N = \mathbb P_{\mu_N}$.

By \eqref{eq:tobeshown-bd-1}, the density $\rho$ satisfies \eqref{eq:weak-sol-bd-1}, $\mathcal Q$-almost surely.
We then focus on the boundary characterization.
As the two assertions \eqref{eq:weak-sol-bd-2} and \eqref{eq:weak-sol-bd-3} are analogous, we concentrate on the left boundary \eqref{eq:weak-sol-bd-2}.
It suffices to show that, for any $0<s<t$ and $\epsilon'\in(0,1)$ sufficiently small but fixed, that
\begin{equation}\label{eq:tobeshown-bd-2}
\begin{aligned}
\limsup_{\epsilon''\downarrow0,\epsilon\downarrow0,N\uparrow\infty}\! &\mathbb E_{\mu_N}\!\! \left[ \int_s^t \left| \frac1{N^2} \sum_{x=\epsilon''N}^{\epsilon'N} \sum_{y=(\epsilon'+\epsilon'')N}^{(1-\epsilon'')N} \mathfrak p \left( \frac yN,\frac xN \right) \eta_r(y)^{\epsilon N}\big(\kappa - \eta_r(x)^{\epsilon N}\big) \right.\right.\\
&\hspace{2.1cm}- \frac1{N^2} \sum_{x=\epsilon''N}^{\epsilon'N} \sum_{y=(\epsilon'+\epsilon'')N}^{(1-\epsilon'')N} \mathfrak p \left( \frac xN,\frac yN \right) \eta_r(x)^{\epsilon N}\big(\kappa - \eta_r(y)^{\epsilon N}\big)\\
&\hspace{2.1cm}\left.\left. + \frac{N^\theta}N \sum_{x=\epsilon''N}^{\epsilon'N} \left[ \alpha \left( \frac xN \right) \big(\kappa-\eta_r(x)^{\epsilon N} \big) - \beta \left( \frac xN \right) \eta_r(x)^{\epsilon N} \right] \right| \,\d r \right] \lesssim \epsilon'.
\end{aligned}
\end{equation}
Define $F_{\epsilon'}(\eta):=N^{-1}\sum_{x=1}^{\epsilon'N} \eta(x)$ to be the cumulative mass at the boundary region $(0,\epsilon')$.
By Dynkin's formula,
\begin{equation}
\begin{aligned}
M^N_t = M^{N,\epsilon'}_t :=\,&F_{\epsilon'}(\eta_t) - F_{\epsilon'}(\eta_0) - N^\gamma\int_0^t \mathcal{L}^N F_{\epsilon'}(\eta_s)\,\d{s}
\end{aligned}
\end{equation}
is a zero-mean martingale.
Similarly to Lemma \ref{lem:martingale-bd}, one finds that $M^N_t$ has a uniformly vanishing quadratic variation on any finite time horizon as $N\uparrow\infty$.
An application of Doob's Maximal inequality yields that
\begin{equation}
\limsup_{N\uparrow\infty} \mathbb{E}_{\mu_N}\left[ \sup_{r \in [s,t]} \left\vert M_r^{N,\epsilon'}\right\vert\right] = 0
\end{equation}
for any 
$\epsilon'>0$.
The fact that $|F_{\epsilon'}| \le \kappa\epsilon'$ then leads to
\begin{equation}\label{eq:bd-pf-1}
\limsup_{N\uparrow\infty} \mathbb{E}_{\mu_N}\left[\left\vert N^\gamma\int_s^t \mathcal{L}^N F_{\epsilon'}(\eta_r)\,\d r \right\vert\right] \le 2\kappa\epsilon'.
\end{equation}
To relate \eqref{eq:bd-pf-1} to \eqref{eq:tobeshown-bd-2}, note that
\begin{equation}\label{eq:bd-pf-2}
\begin{aligned}
N^\gamma\mathcal{L}^N F_{\epsilon'}(\eta) &= \frac1{N^2}\sum_{x=1}^{\epsilon'N} \sum_{y=\epsilon'N+1}^{N-1} \mathfrak p \left( \frac yN,\frac xN \right)  \eta(y)\big(\kappa - \eta(x)\big)\\
&\quad - \frac1{N^2}\sum_{x=1}^{\epsilon'N} \sum_{y=\epsilon'N+1}^{N-1} \mathfrak p \left( \frac xN,\frac yN \right)  \eta(x)\big(\kappa - \eta(y)\big)\\
&\quad + \frac{N^\theta}N \sum_{x=1}^{\epsilon'N} \big[\kappa\alpha^N(x)-(\alpha^N(x)+\beta^N(x))\eta(x)\big].
\end{aligned}
\end{equation}
Repeating the proof of Lemma \ref{lem:source}, we obtain for every $\epsilon'>0$ that
\begin{equation}\label{eq:bd-pf-3}
\limsup_{\epsilon''\downarrow0,\epsilon\downarrow0,N\uparrow\infty} \mathbb E_{\mu_N} \left[ \frac1N \int_s^t \left| \sum_{x=1}^{\epsilon'N} \alpha^N(x)\eta_r(x) - \sum_{x=\epsilon''N}^{\epsilon'N} \alpha \left( \frac xN \right) \eta_r(x)^{\epsilon N} \right| \,\d r \right] = 0.
\end{equation}
Similar limit holds with $(\alpha^N,\alpha)$ replaced by $(\beta^N,\beta)$.
Then, \eqref{eq:tobeshown-bd-2} is a consequence of \eqref{eq:bd-pf-1}, \eqref{eq:bd-pf-2}, \eqref{eq:bd-pf-3}, and Lemma \ref{lem:boundary} applied to $\mathbb P_{\mu_N}$.
\end{proof}

\subsection{Proof of the lemmas}\label{sec:proof_lemmas-bd}

Lemma \ref{lem:tight-bd} and \ref{lem:martingale-bd} are proved with the same argument as used for the infinite dynamics, see Lemma \ref{lem:tight} and \ref{lem:martingale}.
The details are omitted.

\begin{proof}[Proof of Lemma \ref{lem:cutting-bd}]
Since $|\eta_t(x)(\kappa-\eta_t(y))| \lesssim (1+\eta_t(x)^2+\eta_t(y)^2)$, the assumption \eqref{eq:bound-l2-bd} implies that the left-hand side of \eqref{eq:cutting-bd} is bounded from above by
\begin{equation}
\limsup_{\epsilon'\downarrow0,N\uparrow\infty} \frac1{N^2} \sum_{(x,y)\in\Lambda_N^2 \setminus NB_{\epsilon'}} \mathfrak p \left( \frac xN,\frac yN \right) \frac{|y-x|}N.
\end{equation}
Next, observe that \eqref{eq:growth} yields
\begin{equation}
\begin{aligned}
&\limsup_{\epsilon'\downarrow0,N\uparrow\infty} \frac1{N^2}\sum_{(x,y)\in\Lambda_N^2 \setminus NB_{\epsilon'}} \mathfrak p \left( \frac xN,\frac yN \right) \frac{|y-x|}N\mathds1_{|\frac yN-\frac xN|<\epsilon'}\\
\lesssim&\,\limsup_{\epsilon'\downarrow0} \int_0^1 \int_{u-\epsilon'}^{u+\epsilon'} \frac1{|u-v|^\gamma} \,\d v\,\d u = 0,
\end{aligned}
\end{equation}
where we used the fact that $\gamma<1$.
Meanwhile, by \eqref{eq:integrability-bd}, we have
\begin{equation}
\begin{aligned}
&\limsup_{\epsilon'\downarrow0,N\uparrow\infty} \frac1{N^2}\sum_{(x,y)\in\Lambda_N^2 \setminus NB_{\epsilon'}} \mathfrak p \left( \frac xN,\frac yN \right) \frac{|y-x|}N \mathds1_{|\frac yN-\frac xN|\ge\epsilon'}\\
=&\,\lim_{\epsilon'\downarrow0} \iint_{[0,1]^2\setminus(\epsilon',1-\epsilon')^2} \mathfrak{p}(u,v)|u-v|\mathds1_{|u-v|\ge\epsilon'}\,\d v\,\d u= 0.
\end{aligned}
\end{equation}
This concludes the proof.
\end{proof}

\begin{proof}[Proof of Lemma \ref{lem:medium-replace-bd}]
Note that, for $\epsilon<\epsilon'$, all terms on the left-hand side of \eqref{eq:medium-replace-bd} are contained in $\Lambda_N$.
Using the same argument as in the proof of Lemma \ref{lem:medium-replace}, one can bound it from above by $C_T(\mathcal I_1+\mathcal I_2)$, where
\begin{equation}
\begin{aligned}
\mathcal I_1 &:= \limsup_{\epsilon\downarrow0,N\uparrow\infty} \left\{ \sup_{t\in[0,T]} \sup_{|z|,|z'| \le \epsilon N} \frac1{N^2}\sum_{(x,y) \in B_{N,\epsilon'}} \big|H_{t,x,y}^N - H_{t,x-z,y-z'}^N\big| \right\};\\
\mathcal I_2 &:= \limsup_{\epsilon\downarrow0,N\uparrow\infty} \left\{ \sup_{t\in[0,T]} \sup_{|z|,|z'|  \le \epsilon N} \frac1{N^2}\sum_{(x,y) \in \Delta_{N,\epsilon'}(z,z')} \big|H_{t,x-z,y-z'}^N\big| \right\}.
\end{aligned}
\end{equation}
In the above, $\Delta_{N,\epsilon'}(z,z')$ refers to the symmetric difference of $N B_{\epsilon'}$ and $N B_{\epsilon'}+(z,z')$.
In view of the definition of $H_{t,x,y}^N$ in \eqref{eq:h}, for fixed $\epsilon'>0$,
\begin{equation}
\begin{aligned}
&\sup_{t\in[0,T]} \sup_{|z|,|z'| \le \epsilon N} \sup_{(x,y) \in NB_{\epsilon'}} \big|H_{t,x,y}^N-H_{t,x-z,y-z'}^N\big| \lesssim \epsilon,\\
&\sup_{t\in[0,T]} \sup_{|z|,|z'| \le \epsilon N} \sup_{(x,y) \in \Delta_{N,\epsilon'}} \big|H_{t,x-z,y-z'}\big| \lesssim 1.
\end{aligned}
\end{equation}
Therefore, $\mathcal I_1+\mathcal I_2$ is bounded from above by
\begin{equation}
\limsup_{\epsilon\downarrow0,N\uparrow\infty} \frac{C_{\epsilon'}}{N^2} \left[ \epsilon\cdot\#\big(N B_{\epsilon'}\big) + \sup_{|z|,|z'|<\epsilon N} \#\big(\Delta_{N,\epsilon'}(z,z')\big) \right] = 0
\end{equation}
and the proof is completed.
\end{proof}

\begin{proof}[Proof of Lemma \ref{lem:source}]
In view of \eqref{eq:bd-rate-estimate} and the fact that $\gamma<1$,
\begin{equation}
\limsup_{\epsilon'\downarrow0,N\uparrow\infty} \frac1N \sum_{x=1}^{\epsilon'N-1} \alpha^N(x) \lesssim \limsup_{\epsilon'\downarrow0,N\uparrow\infty} \frac1N \sum_{x=1}^{\epsilon'N} \left( \frac xN \right)^{-\gamma} = 0.
\end{equation}
Together with \eqref{eq:bound-l2-bd}, this yields that
\begin{equation}\label{eq:source-pf-1}
\limsup_{\epsilon'\downarrow0,N\uparrow\infty} \mathbb E_N \left[ \frac1N \int_0^T \sum_{x=1}^{\epsilon'N-1} G_t \left( \frac xN \right) \alpha^N(x) \eta_t(x)\,\d t \right] = 0.
\end{equation}
Notice that similar estimate holds for the summation over $(1-\epsilon')N+1 \le x \le N-1$.
Moreover, recall that \eqref{eq:tail-bd} holds in $L^1((0,1))$, so for any fixed $\epsilon'>0$,
\begin{equation}\label{eq:source-pf-2}
\limsup_{N\uparrow\infty} \mathbb E_N \left[ \frac1N \int_0^T \sum_{x=\epsilon'N}^{(1-\epsilon')N} G_t \left( \frac xN \right) \left| \alpha^N(x) - \alpha \left( \frac xN \right) \right| \eta_t(x)\,\d t \right] = 0.
\end{equation}
Finally, as the function $G_t(u)\alpha(u)$ is uniformly continuous in $[\epsilon',1-\epsilon']$,
\begin{equation}\label{eq:source-pf-3}
\limsup_{\epsilon\downarrow0,N\uparrow\infty} \mathbb E_N \left[ \frac1N \int_0^T \left| \sum_{x=\epsilon'N}^{(1-\epsilon')N} G_t \left( \frac xN \right) \alpha \left( \frac xN \right) \big(\eta_t(x) - \eta_t(x)^{\epsilon N}\big) \right| \,\d t \right] = 0.
\end{equation}
The limit \eqref{eq:source} then follows from the combination of \eqref{eq:source-pf-1}, \eqref{eq:source-pf-2} and \eqref{eq:source-pf-3}.
The other assertion concerning $(\beta^N,\beta)$ is proved similarly.
\end{proof}

\begin{proof}[Proof of Lemma \ref{lem:boundary}]
We prove \eqref{eq:boundary-1}, since \eqref{eq:boundary-2} as well as the criteria corresponding to the right boundary can be shown by the same argument.
Comparing to Lemma \ref{lem:cutting-bd} and \ref{lem:medium-replace-bd}, the absence of smooth test function forces us to apply better estimate on the rate of short jumps.
For small $\epsilon'>\epsilon''>0$, define
\begin{equation}
\Lambda_{\epsilon'} := (0,\epsilon']\times(\epsilon',1), \qquad \Lambda_{\epsilon',\epsilon''} := [\epsilon'',\epsilon']\times[\epsilon'+\epsilon'',1-\epsilon''].
\end{equation}
By \eqref{eq:bound-l2-bd} and the same argument used in Lemma \ref{lem:cutting-bd} and \ref{lem:medium-replace-bd}, it suffices to show that for small but fixed $\epsilon'>0$,
\[
\mathcal J_1 := \limsup_{\epsilon''\downarrow0,N\uparrow\infty} \frac{N^\gamma}N \sum_{(x,y) \in N(\Lambda_{\epsilon'}\setminus\Lambda_{\epsilon',\epsilon''})} \mathfrak{p}(x,y) = 0,
\]
and for small but fixed $\epsilon'>\epsilon''>0$,
\begin{align*}
&\mathcal J_2 := \limsup_{\epsilon\downarrow0,N\uparrow\infty} \left\{ \sup_{|z|,|z'| \le \epsilon N} \frac{N^\gamma}N \sum_{(x,y) \in N\Lambda_{\epsilon',\epsilon''}} \big|\mathfrak{p}(x,y)-\mathfrak{p}(x-z,y-z')\big| \right\} = 0,\\
&\mathcal J_3 := \limsup_{\epsilon\downarrow0,N\uparrow\infty} \left\{ \sup_{|z|,|z'| \le \epsilon N}\frac{N^\gamma}N \sum_{(x,y) \in N\Delta_{\epsilon',\epsilon''}(z,z')} \mathfrak{p}(x-z,y-z') \right\} = 0.
\end{align*}
Here in $\mathcal J_3$, $\Delta_{\epsilon',\epsilon''}(z,z')$ is the symmetric difference of $\Lambda_{\epsilon',\epsilon''}$ and $\Lambda_{\epsilon',\epsilon''}+N^{-1}(z,z')$.

We first claim that $\mathcal J_1$ is negligible.
By \eqref{eq:growth}, since $\gamma<1$,
\begin{equation}
\begin{aligned}
&\limsup_{\epsilon''\downarrow0,N\uparrow\infty} \frac{N^\gamma}N \sum_{x=(\epsilon'-\epsilon'')N}^{\epsilon' N} \sum_{y=\epsilon'N+1}^{x+\epsilon''N} \mathfrak{p}(x,y)\\
\lesssim\, &\limsup_{\epsilon''\downarrow0,N\uparrow\infty} \frac1{N^2} \sum_{x=(\epsilon'-\epsilon'')N}^{\epsilon' N} \sum_{y=\epsilon'N+1}^{x+\epsilon''N} \left( \frac yN-\frac xN \right)^{-1-\gamma}\\
\lesssim\,&\limsup_{\epsilon''\downarrow0} \int_{\epsilon'-\epsilon''}^{\epsilon'} \int_{\epsilon'}^{u+\epsilon''} (v-u)^{-1-\gamma} \,\d v\,\d u = 0.
\end{aligned}
\end{equation}
In view of (A2), what is remained in $\mathcal J_1$ can be written as
\begin{equation}
\limsup_{\epsilon''\downarrow0,N\uparrow\infty} \frac1{N^2} \sum_{(x,y) \in N(\Lambda_{\epsilon'}\setminus\Lambda_{\epsilon',\epsilon''})} \mathfrak p \left( \frac xN,\frac yN \right) \mathds1_{|\frac xN-\frac yN|>\epsilon''}.
\end{equation}
Since $\mathfrak{p}(u,v)\mathds1_{|u-v|>\epsilon''}$ is continuous, it is equal to
\begin{equation}
\lim_{\epsilon''\downarrow0} \iint_{\Lambda_{\epsilon'}\setminus\Lambda_{\epsilon',\epsilon''}} \mathfrak{p}(u,v)\mathds1_{|u-v|>\epsilon''}\,\d v\,\d u,
\end{equation}
and the limit is $0$ due to \eqref{eq:integrability-bd}.

For $\mathcal J_2$, note that $\Lambda_{\epsilon',\epsilon''}$ is bounded and strictly apart from $\{u=v\}$ for any $\epsilon''>0$.
Conditions (A1) and (A2) then yield that for fixed $\epsilon'>\epsilon''>0$,
\begin{equation}
\begin{aligned}
\mathcal J_2 \le \limsup_{\epsilon\downarrow0,N\uparrow\infty} \frac1{N^2} \cdot \#\big(N\Lambda_{\epsilon',\epsilon''}\big) \cdot \epsilon L_{\epsilon',\epsilon''} = 0.
\end{aligned}
\end{equation}

For $\mathcal J_3$, the argument is similar and we get for fixed $\epsilon'>\epsilon''>0$ that
\begin{equation}
\mathcal J_3 \le \limsup_{\epsilon\downarrow0,N\uparrow\infty} \sup_{|z|,|z'| \le \epsilon N} \frac{C''}{N^2} \#\big(N\Delta_{\epsilon',\epsilon''}(z,z')\big) = 0
\end{equation}
The desired conclusion then follows immediately.
\end{proof}

\printbibliography

\end{document}